\def\eps{\varepsilon}
\def\P{{\mathbb P}}
\def\E{{\mathbb E}}
\def\R{{\mathbb R}}
\def\1{{\mathbf 1}}
\newcommand{\FF}          {\mathcal{F}}
\newcommand{\HH}         {\mathcal{H}}
\newtheorem{theorem}{Theorem}[section]
\newtheorem{lemma}[theorem]{Lemma}
\newtheorem{corollary}[theorem]{Corollary}
\newtheorem{proposition}[theorem]{Proposition}
\newtheorem{remark}[theorem]{Remark}
\newenvironment{proof}[1][Proof.]{\textbf{#1} }{\hfill $\blacksquare$}
\def\beq{\begin{equation}}
\def\eeq{\end{equation}}
\newcommand{\bei}{\begin{itemize}}
\newcommand{\eei}{\end{itemize}}
\newcommand{\ben}{\begin{enumerate}}
\newcommand{\een}{\end{enumerate}}
\newcommand{\beqn}{\begin{eqnarray}}
\newcommand{\beqnn}{\begin{eqnarray*}}
\newcommand{\eeqn}{\end{eqnarray}}
\newcommand{\eeqnn}{\end{eqnarray*}}
\newcommand{\brm}{\begin{rmk}}
\newcommand{\erm}{\end{rmk}}
\begin{document}

\title{Approximation of the Height process of a continuous state branching process with interaction}

\author{
I.~Dram\'{e}  \footnote{  \scriptsize{Université Cheikh Anta Diop de Dakar, FST, LMA, 16180 Dakar-Fann, S\'en\'egal. iboudrame87@gmail.com
}}
\and
E.~Pardoux \setcounter{footnote}{6}\footnote{ \scriptsize {Aix-Marseille Universit\'{e}, CNRS,  Centrale Marseille, I2M, UMR 7373, 13453 Marseille, France. etienne.pardoux@univ-amu.fr}}
}

\maketitle

\begin{abstract}
In this work, we first show that the properly rescaled height process of the genealogical tree of a continuous time branching process converges to the height process of the genealogy of a (possibly discontinuous) continuous state branching process. We then prove the same type of result for generalized branching processes with interaction.
\end{abstract}

\vskip 3mm
\noindent{\textbf{Keywords}: } Continuous-State Branching Processes; Scaling Limit; Galton-Watson Processes; L\'{e}vy Processes; Local time; Height Process;
\vskip 3mm

\section{Introduction}
Continuous state branching processes (or CSBP in short) are the analogues of Galton-Watson (G-W) processes in continuous time and continuous state space. Such classes of processes have been introduced by Jirina \cite{Ji} and studied by many authors included Grey \cite{Grey}, Lamperti \cite{Lam1},  to name but a few. These processes are the only possible weak limits that can be obtained from sequences of rescaled G-W processes, see  Lamperti \cite{Lam2} and Li \cite{Li1}, \cite{Li2}. 

While rescaled discrete-time G-W processes converge to a CSBP,  it has been shown in Duquesne and Le Gall \cite{DLG} that the genealogical structure of the G-W processes converges too. More precisely, the corresponding rescaled sequences of discrete height process, converges to the height process in continuous time that has been introduced by Le Gall and Le Jan in \cite{lGlJ}.

A lot of work has been devoted recently to generalized branching processes, which model competition within the population. This includes generalized CSBPs, see among many others Li \cite{Lps}, Li, Yang and Zhou \cite{LYZ} and the references therein. For the approximation of such generalized CSBPs by discrete time generalized GW processes, we refer to the general results in Bansaye, Caballero and M\'el\'eard \cite{BCM}, and for the approximation by continuous time 
generalized GW processes to our recent paper \cite{DP}.

Some work has been also devoted recently to the description of the genealogy of such generalized CSBPs,
see Le, Pardoux and Wakolbinger \cite{LePW} and Pardoux \cite{EP} for the case of continuous such processes and 
both Berestycki, Fittipaldi and Fontbona \cite{BFF} and Li, Pardoux and Wakolbinger \cite{LiPW} for the general case.
\cite{BFF} allows processes without a Brownian component unlike \cite{LiPW}, but the latter allows more general interactions.
The present paper studies the convergence of the genealogy of a generalized continuous time GW process to that of a generalized possibly discontinuous CSBP, under the same assumptions as \cite{LiPW}. 

We first give a construction of the CSBP as a scaling limit of continuous time G-W branching processes. To then give a precise meaning to the convergence of trees, we will code G-W trees by a continuous exploration process as already defined by Dram\'{e} et al. in \cite{DPS}, and we will establish the convergence of this (rescaled) continuous process to the continuous height process defined in \cite{LiPW}, see also  \cite{DLG}. Each jump of our generalized CSBP corresponds to the birth of a significant proportion of the total population, whose genealogical tree needs to be explored by our height process. This gives rise to a special term in the equation for the height process, which has possibly unbounded variations and has no martingale property. It also destroys any possible Markov property of the height process. The tightness of such a term cannot be established by standard techniques. We use for that purpose a special method which has been developed in \cite{LiPW}, see the proof of Proposition \ref{convHN} below. The main result of this paper is Theorem \ref{thfin} in section \ref{sec4}.

The organization of the  paper is as follows : In Section 2 we recall some basic definitions and notions concerning branching processes. 
Section 3, which is by far the longest one, considers the height process in the case without interaction. It is devoted to the description of the discrete approximation of both the population process and the height process of its genealogical tree. We prove the convergence of the height process, and of its local time. Section 4 introduces the interaction, via a Girsanov change of probability measure, and establishes the main result. We consider first the case where the interaction function has a bounded derivative, and then the general situation, which allows in particular the popular so--called logistic (i.e. quadratic) interaction.

 We shall assume that all random variables in the paper are defined on the same probability space $( \Omega,\mathcal{F},\mathbb{P})$. We shall use the following notations $\mathbb{Z}_+=\{0,1, 2,... \}$, $\mathbb{N}=\{1, 2,... \}$, $\mathbb{R}=(-\infty, \infty)$ and $\mathbb{R}_+=[0, \infty)$. For $x$ $\in$ $\mathbb{R}_+$, $[x]$ denotes the integer part of $x$.

\section{The Height process of a continuous state branching process}
\subsection{Continuous state branching process}
A CSBP is a $\mathbb{R}_+$-valued strong Markov process has the property, $\mathbb{P}_x$ denoting the law of the process when starts from $x$ at time $t=0$, $\mathbb{P}_{x+y}= \mathbb{P}_{x} \ast \mathbb{P}_{y}$. 
More precisely, a CSBP $X^x=(X_t^x, \ t\geqslant0)$ (with initial condition $X_0^x=x$) is a Markov process taking values in $[0, \infty]$, where $0$ and $\infty$ are two absorbing states, and satisfying the branching property; that is to say, it's Laplace transform satisfies 
\begin{equation*}
   \mathbb{E} \left[ \exp (-\lambda X_t^x)\right] = \exp \left\{ -x u_t(\lambda)\right\}, \quad \mbox{for} \ \lambda\geqslant0, 
\end{equation*}
for some non negative function $u_t(\lambda)$. According to Silverstein \cite{Sil}, the function $u_t$ is the unique nonnegative solution of the integral equation
\begin{equation}\label{Utlamda}
 u_t(\lambda) = \lambda - \int_{0}^{t} \psi ( u_r(\lambda)) dr, 
\end{equation}
where $\psi$ is called the branching mechanism associated with $X^x$ and is defined by
\begin{equation*}
 \psi(\lambda) = b\lambda +{c\lambda^2}+ \int_{0}^{\infty} (e^{-\lambda z}-1+\lambda z\mathbf{1}_{\{z\le 1\}}) \mu(dz), 
\end{equation*}
where $b \in \mathbb{R}$, $c\geqslant0$ and $\mu$ is a $\sigma$-finite measure which satisfies $(1\wedge z^2) \mu(dz)$ is a finite measure on $(0,\infty)$. We shall in fact assume in this paper that 
\begin{equation*}
 {(\bf H)}  : \quad \int_{0}^{\infty} (z\wedge z^2) \mu(dz) < \infty \quad \mbox{and} \quad c>0 .
\end{equation*}
The first assumption implies that the process $X^x$ does not explode and we allows to write the last integral in the above equation in the following form 
\begin{equation}\label{PSIEXPRES}
 \psi(\lambda) = b\lambda +{c\lambda^2}+ \int_{0}^{\infty} (e^{-\lambda z}-1+\lambda z) \mu(dz).
\end{equation}
Let us recall that $b$ represents a drift term, $c$ is a diffusion coefficient and $\mu$ describes the jumps of the CSBP. The CSBP is then characterized by the triplet $(b, c, \mu)$ and can also be defined as the unique non negative strong solution of a stochastic differential equation. More precisely, from Fu and Li \cite{FL} (see also the results in Dawson-Li \cite{DaLi}) we have 
\begin{equation}\label{XT}
X_t^x= x - b \int_{0}^{t} X_s^x ds + \sqrt{2c} \int_{0}^{t}  \int_{0}^{X_s^x} W(ds,du) + \int_{0}^{t}\int_{0}^{\infty}\int_{0}^{X_{s^-}^x}z \overline{M}(ds, dz, du),
\end{equation}
where $W(ds,du)$ is a space-time white nose on $(0,\infty)^{2}$, $M(ds, dz, du)$ is a Poisson random measure on $(0,\infty)^{3}$, with intensity $ds\mu(dz)du$, and $\overline{M}$ is the compensated measure of $M$. 
\subsection{The height process in the case without interaction}
We shall also interpret below the function $\psi$ defined by \eqref{PSIEXPRES} as the Laplace exponent of a spectrally positive L\'{e}vy process $Y$. Lamperti \cite{Lam1} observed that CSBPs are connected to L\'{e}vy processes with no negative jumps by a simple time-change. More precisely, define
\begin{equation*}
A_s^x=\int_{0}^{s}X_t^x dt, \quad \tau_s=\inf\{t>0, \ A_t^x >s\} \quad \mbox{and} \quad Y(s)=X_{\tau_s}^x. 
\end{equation*}
Then $Y(s)$ is a L\'evy process of the form until the first that it hits $0$
\begin{equation}\label{YLevy}
Y(s)= - b s + \sqrt{2c} B(s) + \int_{0}^{s}\int_{0}^{\infty}z \overline{\Pi}(dr, dz),
\end{equation}
where $B$ is a standard Brownian motion and $\overline{\Pi}(ds, dz)= \Pi(ds, dz)- ds\mu(dz)$, $\Pi$ being a Poisson random measure on $\mathbb{R}_{+}^{2}$ independent of $B$ with mean measure $ds\mu(dz)$. We refer the reader to \cite{Lam1} for a proof of that result.
To code the genealogy of the CSBP, Le Gall and Le Jan \cite{lGlJ} introduced the so-called height process, which is a functional of a L\'{e}vy process with Laplace exponent $\psi$; see also Duquesne and Le Gall \cite{DLG}. In this paper, we will use the new definition of the height process $H$ given by Li et all in \cite{LiPW}. Indeed, if the L\'{e}vy process $Y$ has the form \eqref{YLevy}, then the associated height process is given by 
\begin{equation}\label{CHS1}
c H(s)= Y(s) - \inf_{0\leqslant r \leqslant s} Y(r) - \int_{0}^{s}\int_{0}^{\infty}\left(z + \inf_{r\leqslant u \leqslant s}Y(u)-Y(r) \right)^+\Pi(dr, dz),
\end{equation}
and it has a continuous modification. Note that the height process $H$ is the one defined in Chapter 1 of  \cite{DLG}.
\begin{figure}[H]
\centering
\includegraphics[width=3.5in]{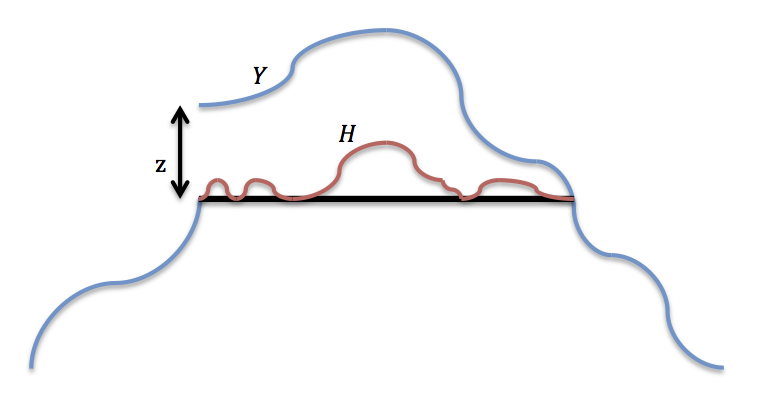}
\caption{Trajectories of $Y$ and $H$.}
\end{figure} 
Let $L_s(t)$ denote the local time accumulated by the process $H$ at level $t$ up to time $s$. The existence of $L_s(t)$ was established already in \cite{DLG}. We have the following Proposition, which can be found in Li et all in \cite{LiPW}.  
\begin{proposition}\label{Tanaka} 
(It\^{o}-Tanaka formula for the local time of $H$) We have 
\begin{equation}\label{solB}
L_s(t)= c(H(s)-t)^+ -  \int_{0}^{s} \mathbf{1}_{\{H(r)>t\}} dY(r) + \int_{0}^{s}\int_{0}^{\infty}\mathbf{1}_{\{H(r)>t\}} \left(z + \inf_{r\leqslant u \leqslant s}Y(u)-Y(r) \right)^+\Pi(dr, dz).
\end{equation}
\end{proposition} 

\subsection{The height process in the case with interaction}
Now the stochastic differential equation \eqref{XT} is replaced by  
\begin{equation}\label{XTPLUS}
X_t^x= X_0^x + \int_{0}^{t} f(X_s^x) ds + \sqrt{2c} \int_{0}^{t}  \int_{0}^{X_s^x} W(ds,du)  + \int_{0}^{t}\int_{0}^{\infty}\int_{0}^{X_{s^-}^x}z \overline{M}(ds, dz, du),
\end{equation}
where $f$ is a function $f : \mathbb{R}^+ \rightarrow \mathbb{R}$, which satisfies 
\begin{equation}\label{hypof}
f \in \mathcal{C}^1(\mathbb{R}^+ ), \quad f(0)=0, \quad f^\prime(z) \le \theta.
\end{equation}
for all  $z \in \mathbb{R}$, for some $\theta \in \mathbb{R}$. In this case, the process $Y$ will be defined as 
\begin{equation}\label{YLEVYR}
Y(s)= \sqrt{2c} B(s) + \int_{0}^{s}\int_{0}^{\infty}z \overline{\Pi}(dr, dz),
\end{equation}
where $B$ is again standard Brownian motion and again $\overline{\Pi}$ denotes the compensated measure  $\overline{\Pi}(ds, dz)= \Pi(ds, dz)- ds\mu(dz).$ This means that in this subsection $b=0$.

The SDE for $H$ reads, see \cite{LiPW}  
\begin{align}\label{CHS12}
c H(s)&= Y(s) + L_s(0) +\int_{0}^{s} f^{\prime}(L_{r}(H(r)))dr \nonumber\\
&- \int_{0}^{s}\int_{0}^{\infty}\left[ z - (L_{s}(H(r))-L_{r}(H(r))) \right]^+\Pi(dr, dz).
\end{align}

\section{Approximation of the Height process without interaction}
In the following,  we consider a specific forest of Bellman-Harris trees, obtained by Poissonian sampling of the height process $H$. In other words, let $\alpha >0$ and we consider a standard Poisson process with intensity $\alpha$. We denote by $\tau_1^\alpha \le \tau_2^\alpha \le \cdots $ the jump times of this Poisson process. If $H$ is seen as the contour process of a continuous tree, consider the forest of the smaller trees carried by the jump times $\tau_1^\alpha \le \tau_2^\alpha \le \cdots $. We have the following proposition, which can be found in \cite{DLG}.

\begin{proposition} $(\mathbf{Duquesne \ and \ Le\ Gall} \ 2002, \ Theorem \ 3.2.1 )$
The trees in this forest are trees, which are distributed as the family tree of a continuous-time Galton-Watson process starting with one individual at time $0$ and such that : 

$\ast$ Lifetimes of individuals have exponential distributions with parameter  $\psi^{\prime} ( \psi^{-1}(\alpha))$;

$\ast$ The offspring distribution is the law of the variable $\eta$ with generating function
\[ \E(s^\eta)= s+ \frac{\psi((1-s)\psi^{-1}(\alpha))}{\psi^{-1}(\alpha) \psi^{\prime} ( \psi^{-1}(\alpha))}.\]
\end{proposition}

Let $N\geqslant1$ be an integer which will eventually go to infinity. In the next two sections, we choose a sequence $\delta_N \downarrow 0$ such that, as $N\rightarrow \infty$, 
\begin{equation*}
{(\bf A)}  : \quad \frac{1}{N} \int_{\delta_N}^{+\infty}  \mu(dz) \rightarrow 0.
\end{equation*}
This implies in particular that 
\begin{equation*}
\frac{1}{N} \int_{\delta_N}^{+\infty} z \mu(dz) \rightarrow 0.
\end{equation*}
Moreover, we will need to consider
\begin{equation}\label{deltan}
 \psi_{\delta_N}(\lambda) =  {c\lambda^2}+ \int_{\delta_N}^{\infty} (e^{-\lambda z}-1+\lambda z) \mu(dz).
\end{equation}
We will also set $\alpha= \psi_{\delta_N}(N)$ in the limit of large populations. 

\subsection{A discrete mass approximation}\label{sec:GWcont}

In this subsection, we obtain a CSBP as a scaling limit of continuous time Galton-Watson branching processes. In other words, the aim of this subsection is to set up a "discrete mass - continuous time" approximation of \eqref{XT} . To this end, we set 
\begin{equation}\label{genepi}
h_{N}(s)= s + \frac{\psi_{\delta_N}((1-s)N)}{N \psi_{\delta_N}^{\prime} (N)}, \quad \quad |s| \leqslant 1.
\end{equation}
It is easy to see that $s\rightarrow h_{N}(s)$ is an analytic function in $(-1,1)$ satisfying $h_{N}(1)=1$ and 
\begin{equation*}
\frac{d^n}{ds^n}h_{N}(0)\ge 0, \quad  n \ge 0.
\end{equation*}
Therefore $h_{N}$ is a probability generating function. and we have
\begin{equation*}
h_{N}(s)= \sum_{\ell\ge0} \nu_{N}(\ell)s^\ell,  \quad |s| \leqslant 1,
\end{equation*}
where $\nu_{N}$ is probability measure on $\mathbb{Z}_+$. Fix $x>0$ the approximation of \eqref{XT} will be given by the total mass $X^{N,x}$ of a population of individuals, each of which has mass $1/N$. The initial mass is $X_0^{N,x}= [Nx]/N$, and $X^{N,x}$ follows a Markovian jump dynamics :  from its current state $k/N$,  
 \begin{equation*}
X^{N,x} \  \mbox{jumps to} \ 
\left\{
    \begin{array}{ll}
   \frac{k+\ell-1}{N} \ \mbox{at rate} \  \psi_{\delta_N}^{\prime} (N) \nu_{N}(\ell) k,  \ \mbox{for all} \ \ell\ge2; &\\\\  \frac{k-1}{N}  \quad  \mbox{at rate} \  \psi_{\delta_N}^{\prime} (N) \nu_{N}(0) k.
    
&
           \end{array}
           \right.
\end{equation*}

In this process, each individual dies without descendant at rate
\[ \frac{\psi_{\delta_N}(N)}{N}=cN+\int_{\delta_N}^\infty z\mu(dz)-\frac{1}{N}\int_{\delta_N}^\infty (1-e^{-Nz})\mu(dz);\]
dies and leaves two descendants at rate
\[ cN+\frac{1}{N}\int_{\delta_N}^\infty \frac{(Nz)^2}{2}e^{-Nz}\mu(dz);\]
and finally dies and leaves $k$ descendants ($k\ge3$) at rate
\[ \frac{1}{N}\int_{\delta_N}^\infty\frac{(Nz)^k}{k!}e^{-Nz}\mu(dz).\]

Let $\mathcal{D}([0,\infty), \mathbb{R}_+)$ denote the space of functions from $[0,\infty)$ into $\mathbb{R}_+$ which are right continuous and have left limits at any $t>0$ (as usual such a function is called c\`adl\`ag). We shall always equip the space $\mathcal{D}([0,\infty),\mathbb{R}_+)$ with the Skorohod topology. The main limit proposition of this subsection is a consequence of Theorem 4.1 in \cite{DP}. 
\begin{proposition}\label{TH1}
 Suppose that Assumptions $(\bf H)$ is satisfied. Then, as $N\rightarrow +\infty$, $\{X_t^{N,x}, \ t\geqslant0 \}$ converges to $\{X_t^x, \ t\geqslant0 \}$ in distribution on $\mathcal{D}([0,\infty),\mathbb{R}_+)$, where $X^x$ is the unique solution of the SDE \eqref{XT}.  
\end{proposition}

\subsection{The approximate height process}

We shall now define $\{H^{N}(s), \ s\geq0\}$, the height process associated to the population process $\{X^{N,x}_t,\ t\ge0\}$. 

Before making precise the evolution of $H^N$, we need to define its local time $L_s^N(t)$, accumulated by $H^{N}$ at level $t$ up to time $s$ by 
\begin{equation}\label{TLint}
  L_s^N(t) = \lim_{\varepsilon\mapsto 0}\frac{1}{\varepsilon} \int_{0}^{s}\mathbf{1}_{\{t\leq H^N(r)< t+\varepsilon\}}dr.
\end{equation}
$L_s^N(t)$ equals $1/ N$ times the number of pairs of $t$-crossings of $H^{N}$  between times $0$ and $s$. In other words, $L_s^N(t)$ equals $(1/2)*(1 / N)$ times the number of visits at level $t$. Note that this process is neither right- nor left-continuous as a function of $s$.

We shall describe several Poisson processes. All of them will be mutually independent, even if we do not repeat it. Let first $\{{Q}_{s}^N, \ s\geq 0\}$ be a Poisson process with intensity $2\int_{\delta_N}^\infty(1-e^{-Nz}-Nze^{-Nz}) \mu(dz)$. This process will describe the ``arrival'' of mutiple births.  Let $\{{P}_{s}^{N}, \ s\geq 0\}$ and $\{{P}_{s}^{\ast,N}, \ s\geq 0\},$  be two mutually independent Poisson processes with respective intensities $2cN^2$ and\\ $2\int_{\delta_N}^{\infty}(e^{-Nz}-1+Nz)\mu(dz)$, which are globally independent of $\Pi$\footnote{Note that the above intensities are the rates of brith and death of the population process $X^{N,x}$, multiplied by $2N$. The slope of $H^N$ is $\pm2N$, which explains the factor $2N$.}.  Let us define ${P}_{s}^{N,-}= {P}_{s}^{N}+{P}_{s}^{\ast,N}$, $\forall$ $s\ge0$. Let $\{Z^N_i, \ i\geqslant1 \}$ be a sequence of i.i.d r.v.'s taking their values in the set 
$\{k/N,\ k\ge2\}$, which are independent of the Poisson processes, and whose law is precised as follows. 
\[ \P(Z^N_i=k/N)= \left(\int_{\delta_N}^\infty(1-e^{-Nz}-Nze^{-Nz}) \mu(dz)\right)^{-1}\int_{\delta_N}^\infty \frac{(Nz)^k}{k!}e^{-Nz}\mu(dz),\, k\ge2.\]

Let $\{V_{s}^{N}, \ s\geq 0\}$  be the c\`adl\`ag $\{-1,1\}$-valued process which  is such that, $s-$almost everywhere,  ${dH^{N}(s)}/{ds}=2NV_{s}^{N}.$  The $({\mathbb{R}}_{+} \times \{-1,1\})$-valued process  $\{(H^{N}(s), V_{s}^{N}), \ s\geq 0\}$ solves the SDE
\begin{equation}\label{DEFHNVN}
\begin{split}
H^{N}(s)=&2 N\int_{0}^{s}V_{r}^{N}dr,  \\
V_{s}^{N}=&1+ 2\int_{0}^{s}\mathbf{1}_{\{V_{r^{-}}^{N}=-1\}}dP_{r}^{N}+2\int_{0}^{s}\mathbf{1}_{\{V_{r^{-}}^{N}=-1\}}dQ_{r}^N
\\ &\quad -2\int_{0}^{s}\mathbf{1}_{\{V_{r^{-}}^{N}=+1\}}dP_{r}^{N,-}  + 2N \big(L_s^{N}(0)- L_{0^+}^{N}(0)\big)
\\ &\quad+ 2N \sum_{i>0, S^N_{i}\leq s}\Big\{L_{s}^{N}\left(H^{N}(S^N_{i})\right)-L_{S^N_{i}}^{N}\left(H^{N}(S^N_{i})\right) \Big\}\wedge\left(Z^N_i-\frac{1}{N}\right),
\end{split}
\end{equation}
where the $S^N_{i}$'s are the successive jump times of the process 
\[ \tilde{Q}^N_s=\int_0^s\mathbf{1}_{\{V_{r^{-}}^{N}=-1\}}dQ^N_r.\] 
For any $i>0$, $NZ^N_i-1$ denotes the number of reflections of $H^{N}$ above the level $H^{N}(S^N_{i})$ before the process $H^N$ may go below that level $H^{N}(S^N_{i})$.

\subsection{Taking the limit in the SDE for $H^N$}

We write the first line of \eqref {DEFHNVN} as 
\begin{equation*}
 cH^{N}(s)= 2cN\int_{0}^{s}\mathbf{1}_{\{V_{r}^{N}=+1\}}dr - 2cN\int_{0}^{s}\mathbf{1}_{\{V_{r}^{N}=-1\}}dr\,.
\end{equation*}
Summing it with the second identity in \eqref {DEFHNVN} divided by $2N$,
using the notations
\begin{align}\label{DIS}
\mathcal{M}_s^{1,N} &=   \frac{1}{N}  \int_{0}^{s}\mathbf{1}_{\{V_{r^{-}}^{N}=-1\}} \big(dP_r^{N} -2cN^2dr\big),\nonumber
\\\mathcal{M}_s^{2,N}& =   \frac{1}{N} \int_{0}^{s}\mathbf{1}_{\{V_{r^{-}}^{N}=+1\}}\big(d{P}_{r}^{N}-2cN^2dr\big),
\\\mathcal{M}_s^{\ast,N} &=  \frac{1}{N} \int_{0}^{s}\mathbf{1}_{\{V_{r^{-}}^{N}=+1\}}\big(d{P}_{r}^{\ast,N}
-2\int_{\delta_N}^{\infty}(e^{-Nz}-1+Nz)\mu(dz)dr\big),\nonumber
\end{align}
and the identity $a\wedge b=b-(b-a)^+$ for  $a,b>0$, we obtain
\begin{align}\label{2DFHNint}
 cH^N(s) &= Y^N(s)+L_s^{N}(0) -  \sum_{i>0, S^N_{i}\leq s} \bigg( Z^N_i-\frac{1}{N}- \left\{L_{s}^{N}\left(H^{N}(S^N_{i})\right)-L_{S^N_{i}}^{N}\left(H^{N}(S^N_{i})\right) \right\}\bigg)^+  ,
\end{align}
where
\begin{align}
Y^N(s)&=\mathcal{M}_s^{1,N}-\mathcal{M}_s^{2,N} +\mathcal{M}_s^{N}+ \epsilon^{N}(s),\label{defYN}\\
\label{eq:EpsiN}
\epsilon^{N}(s)
&= \frac{1}{2N} -  \frac{V_{s}^{N}}{2N}-{\mathcal{M}}_s^{\ast,N} - L_{0}^{N}(0)+  \int_{\delta_N}^{\infty}z\mu(dz) \left( s- 2\int_{0}^{s}\mathbf{1}_{\{V_{r}^{N}=+1\}}dr \right)\\
&\quad+\frac{2}{N}\int_{\delta_N}^\infty (1-e^{-Nz})\mu(dz)\int_0^s\mathbf{1}_{\{V_{r}^{N}=+1\}}dr,\nonumber\\
\mathcal{M}_s^N&=  \int_{0}^{s} Z^N_{\tilde{Q}^N_r}\, d\tilde{Q}^N_r- s\int_{\delta_N}^{\infty}z\mu(dz). \nonumber
\end{align}
In the following, we will need the equation \eqref{eq:VNs} below.  Writing $V^N_r$ as 
\[\mathbf{1}_{\{V_{r^{-}}^{N}=+1\}} - \mathbf{1}_{\{V_{r^{-}}^{N}=-1\}} \]
and using \eqref{DIS}, we deduce from \eqref{DEFHNVN}, 
\begin{equation}\label{eq:VNs}
\begin{split}
V^N_s =& 1+2N \left(\mathcal{M}_s^{1,N}-\mathcal{M}_s^{2,N} -{\mathcal{M}}_s^{\ast,N} +\mathcal{M}_s^{N} \right) - 4cN^2 \int_0^s V^N_r dr \\
&\quad+4\int_{\delta_N}^\infty (1-e^{-Nz})\mu(dz)\int_0^s\mathbf{1}_{\{V_{r}^{N}=+1\}}dr
\\&\quad+  2N \int_{\delta_N}^{\infty}z\mu(dz) \left( s- 2\int_{0}^{s}\mathbf{1}_{\{V_{r}^{N}=+1\}}dr \right)+  2N \big(L_s^{N}(0)- L_{0^+}^{N}(0)\big)\\
&\quad -2N \sum_{i>0, S^N_{i}\leq s} \bigg( Z^N_i-\frac{1}{N}- \left\{L_{s}^{N}\left(H^{N}(S^N_{i})\right)-L_{S^N_{i}}^{N}\left(H^{N}(S^N_{i})\right) \right\}\bigg)^+ .
\end{split}
\end{equation}

We first need an apriori estimate on the sequence of processes $H^N$.
\begin{proposition}\label{aeHN}
For any $s>0$, 
\[ \sup_{N\ge1}\E\left(\sup_{0\le r\le s}H^N(r)\right)<\infty\, .\]
\end{proposition}
\begin{proof} 
We first recall that by construction, $H^N(s)\ge0$, for all $s\ge0$, a.s. In this proof, we shall make use of several results to be proven below. The reader can easily verify that each of those results will be proven independently of the present result.

We first note that
\[ \int_{0}^{s}\mathbf{1}_{\{V_{r}^{N}=1\}}dr+\int_{0}^{s}\mathbf{1}_{\{V_{r}^{N}=-1\}}dr=s,\]
and from the identity $V^N_r=\mathbf{1}_{\{V_{r}^{N}=1\}}-\mathbf{1}_{\{V_{r}^{N}=-1\}}$ and the first line of \eqref{DEFHNVN}, we deduce that
\begin{equation*}
\int_{0}^{s}\mathbf{1}_{\{V_{r}^{N}=1\}}dr-\int_{0}^{s}\mathbf{1}_{\{V_{r}^{N}=-1\}}dr=(2N)^{-1}H^N(s)\,.
\end{equation*}
It follows from those two identities that
\begin{equation}\label{surN} 
 (2N)^{-1}H^N(s)=2\int_{0}^{s}\mathbf{1}_{\{V_{r}^{N}=1\}}dr-s\ge0\,.
 \end{equation}
We will prove in Lemma \ref{LN0INF} below that $L^N_s(0)=-\inf_{0\le r\le s}Y^N_r$.  From \eqref{2DFHNint}, \eqref{eq:EpsiN} and the last two identities, we deduce that
\begin{equation}\label{eq:aeHN}
\begin{split}
cH^N(s)&\le\sup_{0\le r\le s}\left\{|\mathcal{M}_r^{1,N}|+|\mathcal{M}_r^{2,N}|+|\mathcal{M}_r^{N}|+|{\mathcal{M}}_r^{\ast,N}|\right\}+N^{-1}
+L^N_0(0)\\&\quad+ \frac{2s}{N}\int_{\delta_N}^\infty\mu(dz)+(2N)^{-1}\int_{\delta_N}^\infty z\mu(dz)\sup_{0\le r\le s} H^N(r)\,.
\end{split}
\end{equation}
Since from assumption $(\bf A)$, $(2N)^{-1}\int_{\delta_N}^\infty z\mu(dz)\to0$ as $N\to\infty$, the result follows from the next four facts.
 Concerning $\mathcal{M}_s^{1,N}$, we deduce from the first line of \eqref{DIS} and Doob's $L^2$ inequalities for martingales that
 \[ \E\left(\sup_{0\le r\le s}|\mathcal{M}^{1,N}_r|\right)\le2\left(\E\left\{|\mathcal{M}^{1,N}_r|^2\right\}\right)^{1/2}\le2\sqrt{2c s}\,.\]
 The same holds concerning $\mathcal{M}_s^{2,N}$. As for $\mathcal{M}_r^N$, we consider \eqref{MNs} below. We first note that
 the expectation of the sup in $s$ of the absolute value of the sum of the last two terms is bounded by
 \[\frac{s}{N}\int_{\delta_N}^\infty\mu(dz)+(2N)^{-1}\int_{\delta_N}^\infty z\mu(dz)\ \E\left(\sup_{0\le r\le s}H^N(r)\right),\]
 which we can plug in \eqref{eq:aeHN} after we have replaced on the left $H^N(s)$ by $\sup_{0\le r\le s}H^N(r)$ and taken the expectation.
 We now consider the first term on the right of \eqref{MNs}, see \eqref{MNtilde}. We deduce from the Burkholder--David--Gundy inequlity for possibly discontinuous martingales (see e.g. Theorem IV.48 in \cite{PhP}) that there exists a constant $C>0$ such that
 (we will use a formula from Remark \ref{remarmuN} below)
 \begin{align*}
 \E\left(\sup_{0\le r\le s}|\tilde{\mathcal{M}}^{N}_r|\right)&\le C\,\E\left\{\left(\int_0^s\int_0^\infty z^2\Pi^N(dr,dz)\right)^{1/2}\right\}\\
 &\le C\,\E\left\{\left(\int_0^s\int_0^1 z^2\Pi^N(dr,dz)\right)^{1/2}\right\}+C\,\E\left\{\left(\int_0^s\int_1^\infty z^2\Pi^N(dr,dz)\right)^{1/2}\right\}
 \\
 & \le C\left(s \int_{\delta_N}^1 z^2\mu(dz)\right)^{1/2}+C\,s\int_1^\infty z\mu(dz),
 \end{align*}

 which is bounded, uniformly w.r.t. $N$. It remains to consider the term $|{\mathcal{M}}_r^{\ast,N}|$. It will be shown below in the proof of Lemma \ref{lemMet} that $\E\sup_{0\le r\le s}|{\mathcal{M}}_r^{\ast,N}|\to0$, as $N\to\infty$. This concludes the proof.
\end{proof}

The first two identities in the previous proof, combined with the just obtained result, clearly yield the following essential result.
\begin{lemma}\label{Ssur2int}
 As $N\longrightarrow \infty$,
\begin{equation*}
\int_{0}^{s}\mathbf{1}_{\{V_{r}^{N}=1\}}dr\longrightarrow \frac{s}{2}; {~~~}\int_{0}^{s}\mathbf{1}_{\{V_{r}^{N}=-1\}}dr\longrightarrow \frac{s}{2}
\end{equation*}
a.s., locally uniformly in $s$.
\end{lemma}

We shall need (recall \eqref{DIS})
\begin{lemma}\label{lemMet}
As $N\longrightarrow\infty$, $$\Big(\mathcal{M}_s^{\ast,N}, \ s\geq0\Big)\longrightarrow  0 \ \mbox{ in} \ \mbox{probability}, \ \mbox{locally} \ \mbox{uniformly} \ \mbox{in} \ \mbox{s}.$$ 
\end{lemma}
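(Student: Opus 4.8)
The plan is to split $\epsilon_N(s)=(2cN)^{-1}(1-V_{s}^{N,\Gamma})-\widetilde{\mathcal{M}}_s^{1,N}-2^{-1}L_{0^{+}}^{N,\Gamma}(0)$ into its three pieces and show each tends to $0$ uniformly for $s$ in a compact interval, the first and third deterministically and the second by an $L^2$-estimate. For the first piece, since $V^{N,\Gamma}$ is $\{-1,1\}$-valued, $0\le (2cN)^{-1}(1-V_{s}^{N,\Gamma})\le (cN)^{-1}$ for every $s$. For the third, I would observe that $H^{N,\Gamma}$ leaves level $0$ at time $0$ along a single ascending segment of slope $2a_N$ with $a_N=N+c^{-1}d_{1,N}q_0^{1,N}\ge N$, and does not return to $0$ before a strictly positive (random) time; hence by \eqref{TLint} one has $L_s^{N,\Gamma}(0)=(ca_N)^{-1}$ for all small $s>0$, so that $L_{0^{+}}^{N,\Gamma}(0)=(ca_N)^{-1}\le (cN)^{-1}$, a deterministic constant. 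Both of these pieces are therefore $O(1/N)$, uniformly in $s$.

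The only real work is the martingale term. Recalling that $\widetilde{\mathcal{M}}^{1,N}$ is $(cN)^{-1}$ times the compensated integral of $\mathbf{1}_{\{V_{r^-}^{N,\Gamma}=+1\}}$ against the Poisson process $P^{\prime,1,N}$ of intensity $2Nd_{1,N}q_0^{1,N}$, its predictable quadratic variation is
$$\langle \widetilde{\mathcal{M}}^{1,N}\rangle_s=\frac{2d_{1,N}q_0^{1,N}}{c^2N}\int_0^s \mathbf{1}_{\{V_{r^-}^{N,\Gamma}=+1\}}\,dr\ \le\ \frac{2d_{1,N}q_0^{1,N}}{c^2N}\,s.$$
Evaluating \eqref{gN1} at $s=0$ gives $q_0^{1,N}=(Nd_{1,N})^{-1}L(N)$, so $d_{1,N}q_0^{1,N}/N=L(N)/N^2$, and the task reduces to proving $L(N)/N^2\to0$. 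I would do this by dominated convergence: by \eqref{eq+int}, $N^{-2}(e^{-Nz}-1+Nz)\le N^{p-2}z^p\wedge z^2\le z^p\wedge z^2$ for all $z>0$ and $N\ge1$ (using $1<p<2$); this bound is $\mu$-integrable by ${(\bf H1)}$ and tends to $0$ pointwise since $N^{p-2}\to0$, whence $L(N)/N^2=\int_0^\infty N^{-2}(e^{-Nz}-1+Nz)\,\mu(dz)\to0$. Consequently $\E[\langle \widetilde{\mathcal{M}}^{1,N}\rangle_T]\to0$, so $\widetilde{\mathcal{M}}^{1,N}$ is in fact a square-integrable martingale on $[0,T]$, and Doob's $L^2$ inequality yields $\E\big[\sup_{0\le s\le T}|\widetilde{\mathcal{M}}_s^{1,N}|^2\big]\le 4\,\E[\langle \widetilde{\mathcal{M}}^{1,N}\rangle_T]\to0$.

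Collecting the three bounds, $\E\big[\sup_{0\le s\le T}|\epsilon_N(s)|\big]\le \tfrac{3}{2cN}+\big(4\,\E[\langle \widetilde{\mathcal{M}}^{1,N}\rangle_T]\big)^{1/2}\to0$, which is stronger than the claimed locally uniform convergence in probability. I expect the only point requiring genuine care to be the estimate $L(N)=o(N^2)$ (equivalently $d_{1,N}q_0^{1,N}=o(N)$); the identification $L_{0^{+}}^{N,\Gamma}(0)=(ca_N)^{-1}$ and the control of $1-V_s^{N,\Gamma}$ are routine once the geometry of the contour near time $0$ is recalled. If one prefers, Lemma \ref{Ssur2int} allows replacing $\int_0^s\mathbf{1}_{\{V_{r^-}^{N,\Gamma}=+1\}}dr$ by its limit $s/2$, but this refinement is not needed here.
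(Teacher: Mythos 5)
Your proof is correct, and it is more informative than what the paper does: the paper simply refers the reader to the proof of Corollary 4.16 in \cite{drameprocessus}, whereas you give a complete, self-contained argument. Your three-way split is the natural one, and each step checks out. The bound $(2cN)^{-1}(1-V_s^{N,\Gamma})\le (cN)^{-1}$ is immediate from $V^{N,\Gamma}$ being $\{-1,1\}$-valued; the identification $L_{0^+}^{N,\Gamma}(0)=(ca_N)^{-1}$ follows from \eqref{TLint} and the fact that the contour leaves $0$ along a single segment of slope $2a_N$ (it is consistent with the appearance of the term $cN\big(L_s^{N,\Gamma}(0)-L_{0^+}^{N,\Gamma}(0)\big)$ in \eqref{DEFHNVNint}); and the martingale estimate is the only substantive point. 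There you correctly compute $\langle\widetilde{\mathcal{M}}^{1,N}\rangle_s\le 2d_{1,N}q_0^{1,N}s/(c^2N)$, read off $q_0^{1,N}=(Nd_{1,N})^{-1}L(N)$ from \eqref{gN1} at $s=0$, and reduce everything to $L(N)=o(N^2)$, which your dominated-convergence argument (domination by $z^p\wedge z^2$, integrable under ${(\bf H1)}$, with pointwise decay from the factor $N^{p-2}$) establishes cleanly. Doob's inequality then yields convergence of $\sup_{s\le T}|\widetilde{\mathcal{M}}_s^{1,N}|$ to $0$ in $L^2$, hence the locally uniform convergence in probability of $\epsilon_N$, indeed in the stronger $L^1(\sup)$ sense. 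What your approach buys is transparency: it isolates exactly which quantitative feature of the renormalization (namely $d_{1,N}q_0^{1,N}=L(N)/N=o(N)$, a consequence of ${(\bf H1)}$) makes the error term vanish, something the paper's bare citation leaves implicit.
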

\begin{proof}
Since $\mathcal{M}_s^{\ast,N}$ is a purely discontinuous local martingale, we deduce from \eqref{DIS} that
\begin{equation*}
\!\left[\mathcal{M}^{\ast, N}\right]_s=\frac{1}{N^2}\! \int_{0}^{s}\!\mathbf{1}_{\{V_{r}^{N}=+1\}} \!dP_r^{\ast,N}  \quad \mbox{and} \quad 
\langle{\mathcal{M}^{\ast,N}\rangle}_{s}=\!\frac{2}{N^2}\int_{\delta_N}^{\infty}(e^{-Nz}-1+Nz)\mu(dz) \int_{0}^{s}\! \mathbf{1}_{\{V_{r}^{N}=+1\}}\!dr.
\end{equation*}
From the Cauchy--Schwartz and Doob's $L^2$-inequality for martingales, we deduce that
\begin{equation*}
\mathbb{E}\left(\sup_{0\leq r\leq r} \left|{\mathcal{M}}_r^{\ast,N}\right| \right) \leq \sqrt{\frac{2s}{N} \int_{\delta_N}^{\infty}z\mu(dz)},
\end{equation*}
which tends to $0$ as $N\to\infty$ from assumption {(\bf A)}.
\end{proof}

Recalling \eqref{2DFHNint}, we have the 
\begin{proposition}\label{313}
As $N\longrightarrow\infty,$ $\epsilon^{N}(s)\longrightarrow  0 \ \mbox{ in} \ \mbox{probability}, \ \mbox{locally} \ \mbox{uniformly} \ \mbox{in} \ \mbox{s}.$
\end{proposition}
\begin{proof}
From \eqref{TLint}, it is easily checked that $L_{0}^{N}(0)= 1/2N$.  $N^{-1}V_{s}^{N}\longrightarrow0$ a.s uniformly with respect to $s$.  However, from \eqref{surN}, we have that  
\begin{equation*}
 \left| \frac{s}{2}- \int_{0}^{s}\mathbf{1}_{\{V_{r}^{N}=+1\}}dr \right| = \frac{1}{4N} H^N(s).
\end{equation*}
Combining this with assumption {(\bf A)}  and Proposition \ref{aeHN}, we deduce that 
\begin{equation*}
 \left( \frac{s}{2}- \int_{0}^{s}\mathbf{1}_{\{V_{r}^{N}=+1\}}dr \right) \int_{\delta_N}^{\infty}z\mu(dz) \longrightarrow  0 \ \mbox{ in} \ \mbox{probability}, \ \mbox{locally} \ \mbox{uniformly} \ \mbox{in} \ \mbox{s}.
\end{equation*}
The result follows by combining these arguments with \eqref{eq:EpsiN} and Lemma \ref{lemMet}.
\end{proof}

Let $\Lambda^N=\sum_{j\ge1}\delta_{(T^N_j,\tilde{Z}^N_j)}$, where the $T^N_j$'s are the jump times of the Poisson process $Q^N$, and the $\tilde{Z}^N_j$'s are i.i.d., independent of the $T^N_j$'s, with the same law as the $Z^N_i$'s. We can couple the two point processes
$\Lambda^N$ and $\Pi^N=\sum_{i\ge1}\delta_{(S^N_i,Z^N_i)}$ in such a way that
\begin{equation*}
\Pi^N(ds,dz)=\mathbf{1}_{V^N_{s-}=-1}\Lambda^N(ds,dz).
\end{equation*}
It is not hard to see (exploiting e.g. Corollary VI.3.5 in Cinlar \cite{EC}) that $\Lambda^N$ is a Poisson Point Process with mean measure $2ds\mu_N(dz)$, where $\mu_N$ is a measure on $(0,+\infty)$ which is supported on the set $\{k/N,\, k\ge2\}$, and is specified by 
\[ \mu_N(\{k/N\})=\int_{\delta_N}^\infty\frac{(Nz)^k}{k!}e^{-Nz}\mu(dz)\,.\]
Let us establish
\begin{lemma}\label{conv-muN}
The sequence $\mu_N$ converges to $\mu$ as $N\to\infty$, in the sense of weak convergence of measures on $(0,+\infty)$.
\end{lemma}
\begin{proof}
It suffices to show that for any $f\in C(0,+\infty)$ with compact support, $\mu_N(f)\to\mu(f)$. But
\begin{align*}
\mu_N(f)=\int_{\delta_N}^\infty \sum_{k\ge2} f\left(\frac{k}{N}\right)\frac{(Nz)^k}{k!}e^{-Nz}\mu(dz)
\to \int_0^\infty f(z)\mu(dz),
\end{align*}
as $N\to\infty$. The pointwise convergence of the integrand follows from the fact that, for fixed $z$, if $\xi_N$ denotes a Poi$(Nz)$ r.v.,
since $f(0)=f(1/N)=0$, at least for $N$ large enough,
\[ f_N(z):=\sum_{k\ge2}f\left(\frac{k}{N}\right)\frac{(Nz)^k}{k!}e^{-Nz}=\E f\left(\frac{\xi_N}{N}\right)\to f(z),\]
as $N\to\infty$ from the law of large numbers. Suppose that supp$(f)\subset[a,+\infty)$. Lebesgue's dominated convergence theorem implies that
\[ \int_{a/2}^\infty f_N(z)\mu(dz)\to \int_{a/2}^\infty f(z)\mu(dz) .\]
It remains to show that $\int_0^{a/2}f_N(z)\mu(dz)\to0=\int_0^{a/2}f(z)\mu(dz)$. But for $z\le a/2$, 
\begin{align*}
f_N(z)&=\sum_{k\ge aN}f\left(\frac{k}{N}\right)\frac{(Nz)^k}{k!}e^{-Nz},\\
|f_N(z)|&\le \|f\|_\infty\P(\xi_N>aN)
\le \frac{4\|f\|_\infty}{a^2}\frac{z}{N},
\end{align*}
where we have used the fact that Var$(\xi_N/N)=z/N$. The result follows, since $N^{-1}\int_{\delta_N}^\infty z\mu(dz)\to0$, again from assumption 
$(\mathbf{A})$.
\end{proof}

We now deduce from Lemma \ref{Ssur2int} and Lemma \ref{conv-muN}
\begin{proposition}\label{pro:convPiN}
As $N\to\infty$, $\Pi^N\Rightarrow \Pi$, in the sense of weak convergence in distribution of random probability measures, where $\Pi$  is a Poisson Point Process with mean measure $ds\mu(dz)$.
\end{proposition}
\begin{proof}
In view of Lemma \ref{conv-muN}, all we need to show is that for any $z>0$ such that $\mu(\{z\})=0$, 
$\Pi^N(\cdot,(z,+\infty))\Rightarrow\Pi(\cdot,(z,+\infty))$. We first note that $\Lambda^N$ converges to a PPP $\Lambda$,
whose mean measure is twice that of $\Pi$. Next, since $\Pi^N$ is dominated by $\Lambda^N$, it is tight (see the criterion in Lemma 16.15 of \cite{OK}), hence it converges along a subsequence to some limiting measure $\widetilde{\Pi}$, which must be a simple point measure, by comparison with $\Lambda$. We shall not distinguish the subsequence from the original one, by an abuse of notation. Let $\HH^N_s$ (resp. $\HH_s$) denote the filtration generated by the process $H^N$ (resp. by its limit in law $H$). We have that
\begin{align*}
 \Pi^N((0,s],(z,\infty))-2\mu_N(z,\infty)\int_0^s\mathbf{1}_{\{V_{r}^{N}=-1\}}dr\ \ \text{is an }\HH^N_s\  \text{martingale}.
 \end{align*}
 This implies that for any $n\ge1$, $0<s_1<\cdots<s_n=s<s'$, any bounded $\Phi\in C(\R^n;\R)$
 \[ \E\left[\Phi(H^N_{s_1},\ldots,H^N_{s_n})\Pi^N((s,s'],(z,+\infty))\right]= 2\mu_N(z,\infty)\E\left[\Phi(H^N_{s_1},\ldots,H^N_{s_n})\int_s^{s'}\mathbf{1}_{\{V_{r}^{N}=-1\}}dr\right]\,.\]
   Taking the limit in this last identity yields that
  \[ \E\left[\Phi(H_{s_1},\ldots,H_{s_n})\widetilde{\Pi}((s,s'],(z,+\infty))\right]= \mu(z,\infty)\E\left[\Phi(H_{s_1},\ldots,H_{s_n})(s'-s)\right]\,.\]
  This being true for all $n\ge1$, all $0<s_1<\cdots<s_n=s<s'$ and all bounded $\Phi\in C(\R^n;\R)$, we have that
  the simple point process $\widetilde{\Pi}((0,s],(z,\infty))$ is such that $\widetilde{\Pi}((0,s],(z,\infty))-s\mu(z,\infty)$ is a martingale. This shows that it is a Poisson process with intensity $\mu(z,\infty)$. Hence 
 $\widetilde{\Pi}$ is a PPP with mean measure $ds\mu(dz)$, so it has the same law as $\Pi$.
  \end{proof}
\begin{remark}\label{remarmuN}
From the definition of $\mu_N$, we have
\[ \int_{0}^{\infty}\mu_N(dz)= \int_{\delta_N}^{\infty}\mu(dz), \quad \mbox{and} \quad  \int_{0}^{\infty}z\mu_N(dz)= \int_{\delta_N}^{\infty}z\mu(dz) .\]
\end{remark}
Hence, we can rewrite $\mathcal{M}^N$, which appeared in \eqref{defYN}, in the following form  
\begin{align}
\mathcal{M}_s^N&= \int_{0}^{s} Z^N_{\tilde{Q}^N_r}\, d\tilde{Q}^N_r- s\int_{\delta_N}^{\infty}z\mu(dz)\nonumber \\
&= \mathcal{\tilde{M}}_s^N   -  \int_{\delta_N}^{\infty}z\mu(dz) \left( s- 2\int_{0}^{s}\mathbf{1}_{\{V_{r}^{N}=-1\}}dr \right),\label{MNs}\\
\mathcal{\tilde{M}}_s^N&=\int_{0}^{s} \int_{0}^{\infty} z \overline{\Pi}^N(dr,dz)\, . \label{MNtilde}
\end{align}
Using the same arguments as in lemma \ref{313}, we prove that the second term tends to $0$ as $N\to\infty$, locally uniformly in $s$.

 We now deduce from the above. 
\begin{corollary}\label{MNversM}
As $N\longrightarrow \infty$, $\mathcal{M}^N \Rightarrow \mathcal{M}$ in $\mathcal{D}([0,\infty))$, where 
\begin{equation*}
 \mathcal{M}_s= \int_{0}^{s} \int_{0}^{\infty}z \overline{\Pi}(dr,dz).
\end{equation*}
\end{corollary}
\begin{proof}
We split $\tilde{\mathcal{M}}^N_s$ into two terms.
For any $\delta>0$ such that $\mu({\delta})=0$, one can deduce from Proposition \ref{pro:convPiN} that
as $N\to\infty$,
\[ \int_0^\cdot \int_{\delta}^\infty z\overline{\Pi}^N(dr,dz) \Rightarrow\int_0^\cdot\int_{\delta}^\infty z\overline{\Pi}(dr,dz)
\ \text{in }\mathcal{D}([0,\infty)).\]
On the other hand,
\begin{align*}
\E\left[\sup_{0\le s\le \bar{s}}\left(\int_0^s\int_{0}^{\delta} z\overline{\Pi}^N(dr,dz)\right)^2\right]&\le8 \ \E\int_0^{ \bar{s}}\mathbf{1}_{V^N_{r}=-1}dr\int_0^\delta z^2\mu_N(dz)\\
&\to4 \bar{s}\int_0^\delta z^2\mu(dz),
\end{align*}
while
\[\E\left[\sup_{0\le s\le \bar{s}}\left(\int_0^s\int_{0}^{\delta} z\overline{\Pi}(dr,dz)\right)^2\right]\le 4 \bar{s}\int_0^\delta z^2\mu(dz)\,.\]
Since $\int_0^\delta z^2\mu(dz)$ can be made arbitrarily small by choosing $\delta>0$ small enough, the result follows from the above statements by standard arguments.
\end{proof}


The following result is essentially very classical
\begin{lemma}\label{M1NM2N}
As $N\longrightarrow\infty$, $$\Big(\mathcal{M}_s^{1,N},\mathcal{M}_s^{2,N}, \ s\geq0\Big)\Longrightarrow \left( \sqrt{c}
B_{s}^{1},  \sqrt{c} B_{s}^{2}, \ s\geq0\right) \ in \ {(\mathcal{D}([0,\infty)))}^{2},$$ where $B_{s}^{1}$ and $B_{s}^{2}$ are two mutually independent standard Brownian motions.
\end{lemma}
\begin{proof}
The precise result here, with the identification of the constants is Proposition 5.3 in \cite{LePW}, see also Proposition 4.23 in \cite{DPS}.
\end{proof}

Let us define 
\begin{equation}\label{BNS}
B^N(s)=  \mathcal{M}_s^{1,N}-\mathcal{M}_s^{2,N}.
\end{equation}
We deduce readily 
\begin{corollary}\label{V12}
As $N\longrightarrow\infty$, $$\Big(B^{N}(s), \ s\geq0\Big)\Longrightarrow \left( \sqrt{2c} B(s), \ s\geq0\right) \ in \ {(\mathcal{D}([0,\infty)))},$$ 
where $B$ is a standard Brownian motion.
\end{corollary}
Let us rewrite \eqref{2DFHNint} in the form
\begin{align}\label{2DFHNintnew}
 cH^{N}(s) =&Y^N(s) + L_s^{N}(0) -\mathcal{R}^{N}(s), 
\end{align}
with
\begin{equation}\label{VC}
Y^N(s)=\epsilon^{N}(s) + B^N(s) + \mathcal{M}_s^N, 
\end{equation}
and
\begin{equation*}\label{eq:RN}
 \mathcal{R}^{N}(s) =  \sum_{i>0, S^N_{i}\leq s} \bigg(Z_i-\frac{1}{N}-  \left\{L_{s}^{N}\left(H^{N}(S^N_{i})\right)-L_{S_{i}^N}^{N}\left(H^{N}(S^N_{i})\right) \right\}\bigg)^+.
\end{equation*}

We have proved so far that $B^N\Rightarrow \sqrt{\frac{2}{c}}  B$ and $\epsilon^{N}+\mathcal{M}^N\Rightarrow \mathcal{M}$, as $N\to\infty$. Then taking the weak limit in \eqref{VC}, we have in fact the  
\begin{corollary}\label{V1}
 As $N\longrightarrow \infty$, $Y^N \Longrightarrow Y $ in $\mathcal{D}([0,\infty))$, where 
\begin{equation*}
Y(s)=\sqrt{2c}B(s) + \int_{0}^{s} \int_{0}^{\infty}z\overline{\Pi}(dr,dz).
\end{equation*}
\end{corollary}
\begin{proof}
Also we have taken separately the limit in $B^N$ and in $\mathcal{M}^N$, it is clear that, at least along a subsequence,
the pair converges, and the limits are independent, since one is Brownian motion, and the other is a	 Poisson integral, both being martingales w.r.t. the same filtration.
\end{proof}

Note that since the jump times do not move only the sequence, in fact $Y^N\Rightarrow Y$ for the topology of locally uniform convergence.
We shall need the following
\begin{lemma}\label{LN0INF}
For any $N\ge1, s>0$, 
\[ L^N_s(0)=-\inf_{0\le r\le s}Y^N(r).\]
\end{lemma}
 \begin{proof}
Recall \eqref{2DFHNintnew}. We first note that, since $L^N_\cdot(0)$ increases only when $H^N(s)=0$, $L^N_s(0)=L^N_{r_s}(0)$, where $r_s=\sup\{r\le s ; H^N(r)=0\}$. We also have $ \mathcal{R}^{N}(r_s)=0$. Consequently
\[  L^N_s(0)=L^N_{r_s}(0)=-Y^N(r_s)\le -\inf_{0\le r\le s}Y^N(r).\]
To establish the converse inequality, let $0\le u_s\le s$ be such that $Y^N(u_s)=\inf_{0\le r\le s}Y^N(r)$. Since  $Y^N(u_s)+L^N_{u_s}(0)=cH^N(u_s)+ \mathcal{R}^{N}(u_s)\ge0$, 
\[ L_s^N(0)\ge L^N_{u_s}(0)\ge -Y^N(u_s)=-\inf_{0\le r\le s}Y^N(r)\,.\]
 \end{proof}

We have similarly 
 \begin{lemma}\label{LNLNINF}
 For any $N\ge1$, $i\ge1$ such that $S^N_i\le {s}$,   
 \[L_{s}^{N}\left(H^{N}(S^N_{i})\right)-L_{S^N_{i}}^{N}\left(H^{N}(S^N_{i})\right)= -\inf_{S^N_{i}\le r\le s}\left( Y^N(r)-Y^N(S^N_{i})\right), \]
 for $s\ge S^N_i$ such that $L_{s}^{N}\left(H^{N}(S^N_{i})\right)-L_{S^N_{i}}^{N}\left(H^{N}(S^N_{i})\right)\le 
Z^N_i -1/N$.
 \end{lemma}
 \begin{proof}
 The argument is the same as in the previous Lemma. On the considered time interval, $H^N(s)$ is reflected above the level $H^N(S_i)$, instead of being reflected above $0$.
  \end{proof}

From Lemma \ref{LN0INF}, we can rewrite \eqref{2DFHNintnew} in the form 
\begin{align}
 cH^{N}(s) &=Y^N(s) -\inf_{0\le r\le s}Y^N(r) -\mathcal{R}^{N}(s),\ \text{where} \label{eq:HN}\\
 \mathcal{R}^{N}(s) &=  \sum_{i>0, S^N_{i}\leq s} \left( Z^N_i-\frac{1}{N} +\inf_{S^N_{i}\le r\le s}\left( Y^N(r)-Y^N(S^N_{i})\right) \right)^+.
 \label{formulaRN}
\end{align}

We also rewrite \eqref{CHS1} as 
\begin{equation}\label{eq:H}
\begin{split}
c H(s)&= Y(s) -\inf_{0\le r\le s}Y(r)-\mathcal{R}(s) ,\ \text{where} \\
  \mathcal{R} (s)&=\int_0^s\int_0^\infty\left(z + \inf_{r\leqslant u \leqslant s}Y(u)-Y(r) \right)^+\Pi(dr, dz).
\end{split}
\end{equation}
We next prove
\begin{proposition}\label{convHN}
As $N\to\infty$, $H^N\Rightarrow H$ in $\mathcal{C}(\R_+)$, where $H$ is given by \eqref{CHS1}, in terms of $Y$, the limit given in 
Corollary \ref{V1}.
\end{proposition}
\begin{proof}
It suffices to show that if $Y^N(s)\to Y(s)$ a.s., locally uniformly in $s$, then $H^N(s)\to H(s)$ in probability, locally uniformly in $s$.
We will show that from any subsequence, we can extract a further subsequence which converges a.s., locally uniformly in $s$.
We will follow closely the proofs of Proposition 3.13 and Corollary 3.15 in \cite{LiPW}. First of all, the same argument as that of Proposition 3.13
in \cite{LiPW} yields that $H^N(s)\to H(s)$ in probability, for any $s\ge0$. We fix $\bar{s}>0$ arbitrary, and let $\mathbf{D}_{\bar{s}}$ be a countable dense subset of $[0,\bar{s}]$.
Along a subsequence, still denoted as $H^N$ by an abuse of notation, $H^N(s)\to H(s)$, for any $s\in\mathbf{D}_{\bar{s}}$. We first note that, as in \cite{LiPW}, 
we can rewrite \eqref{CHS1} as follows. Let, for any $0\le r<s$, $\overline{Y}^s_r:=\inf_{r\le u\le s}Y(u)$. We have
\[ cH(s)=Y(s)-\overline{Y}^s_0-\sum_{0\le r\le s}\Delta \overline{Y}^s_r,\]
where $\Delta\overline{Y}^s_r$ denotes the jump at $r$ of the increasing function $r\mapsto\overline{Y}^s_r$.

Since the process $Y$ is c\`adl\`ag, with only positive jumps, 
\[ \Phi_Y(h):=\sup_{0\le r<s\le r+h\le\bar{s}} (Y(s)-Y(r))_-\] 
is a.s. a continuous function of $h$ on $[0,1]$, such that $\Phi_Y(0)=0$. Now
\begin{align*}
c(H(s+h)-H(s))&=Y(s+h)-Y(s)-\overline{Y}^{s+h}_0+\overline{Y}^s_0-\sum_{0\le r\le s}(\overline{Y}^{s+h}_r-\overline{Y}^s_r)
-\sum_{s<r\le s+h}\overline{Y}^{s+h}_r\\ 
&\ge Y(s+h)-Y(s)-\sum_{s<r\le s+h}\overline{Y}^s_r
\end{align*}
But since $Y(s+h)-\sum_{s<r\le s+h}\overline{Y}^{s+h}_r\ge\inf_{s<r\le s+h}Y(r)$, we have
\begin{align*} 
c(H(s+h)-H(s))&\ge \inf_{s<r\le s+h}Y(r)-Y(s),\ \text{hence}\\
(H(s+h)-H(s))_-\le \Phi_Y(h)\,.
\end{align*}
It follows from \eqref{eq:H} that the same formula relates $H^N$ with $Y^N$, and we also have
\[ (H^N(s+h)-H^N(s))_-\le \Phi_{Y^N}(h).\]
However, it is not true that $\Phi_{Y^N}(h)\to0$, as $h\to0$, since $Y^N$ has negative jumps of size $-1/N$. So $\limsup_{h\to0}\Phi_{Y^N}(h)\le1/N$. 
Moreover, since $Y^N(s)\to Y(s)$ uniformy on $[0,\bar{s}]$, we have that \\ $\limsup_{h\to0}\sup_{N\ge N_0}\Phi_{Y^N}(h)\le1/N_0$, for all $N_0\ge1$.
Now the fact that $H^N(s)\to H(s)$ uniformly on $\mathbf{D}_{\bar{s}}$ (hence also on $[0,\bar{s}]$) follows from the next Lemma. The result follows.
\end{proof}

It remains to establish
\begin{lemma}
Consider a sequence $\{g_N,\ N\ge1\}$ of c\`al\`ag functions from $\R_+$ into $\R$, $g$ a continuous function from $\R_+$ into $\R$, and $\bar{s}>0$, which are such that $g_N(s)\to g(s)$, for all 
$s\in\mathbf{D}_{\bar{s}}$. Assume moreover that
\[ \limsup_{h\to0}\sup_{N\ge N_0}(g_N(s+h)-g_N(s))_-\le1/N_0\,.\]
Then, as $N\to\infty$,
\[ \sup_{s\in\mathbf{D}_{\bar{s}}}|g_N(s)-g(s)|\to0.\]
\end{lemma}
The proof of this Lemma, which can be viewed as an extension of the second Dini theorem,  is identical to that of Lemma 3.16 in \cite{LiPW}, even if its statement is slightly different, so we do not reproduce it.

We now have

\begin{proposition}\label{TRIPLET}
As $N\longrightarrow \infty$, $(H^N,Y^N, \mathcal{R}^{N}) \Rightarrow (H,Y, \mathcal{R})$ in $\mathcal{C}([0,+\infty))\times {(\mathcal{D}([0,+\infty))}^{2}$.
\end{proposition}
\begin{proof} We can rewrite \eqref{eq:HN}, in the form 
\begin{equation*}
\mathcal{R}^{N}(s) =Y^N(s) -\inf_{0\le r\le s}Y^N(r) -cH^{N}(s).
\end{equation*}
It follows from Proposition \ref{convHN} that the sequence $\{H^{N},\ N\ge1\}$ is tight in $\mathcal{C}(\R_+)$. However, from Corollary \ref{V1}, we deduce that the sequences $\{Y^{N}(s),\ s\ge0\}_{N\ge1}$ and $\{\inf_{0\le r\le s} Y^{N}(r),\  s\ge0\}_{N\ge1}$ are tight in $\mathcal{D}([0,+\infty)$. Moreover, the limit of the sequence\\ $\{\inf_{0\le r\le s} Y^{N}(r),\  s\ge0\}_{N\ge1}$ is a.s. continuous. Hence the tightness of the sequence $\{\mathcal{R}^{N},\ N\ge1\}$ follows from Proposition \ref{TenD2} below. Now since $(H^N,Y^N, \mathcal{R}^{N})$ is tight, along an appropriate subsequence (which we do not distinguish from the original sequence),
\begin{equation*}
(H^N,Y^N, \mathcal{R}^{N}) \Rightarrow (H,Y, \mathcal{R}).
\end{equation*}
Moreover, from \eqref{eq:H} and the fact that the law of $Y$ is uniquely specified, we deduce that the limit is unique, which implies that the whole sequence converges.
\end{proof}

We shall need below the 
\begin{lemma}\label{ParZeng0}
For any $\delta>0$,  as $N\to\infty$,
\[ \int_0^\cdot \int_{0}^\delta  \left( z-\frac{1}{N} +\inf_{r\le u\le \cdot } Y^N(u)-Y^N(r) \right)^+ \Pi^N(dr,dz) \Rightarrow\int_0^\cdot\int_{0}^\delta \left(z + \inf_{r\leqslant u \leqslant \cdot }Y(u)-Y(r) \right)^+\Pi(dr, dz)
\]
in $\mathcal{D}([0,\infty))$.
\end{lemma}
\begin{proof}
Let us decompose $cH^N(s)$ and $cH(s)$ in the form 
\begin{align*}
cH^N(s)=\mathcal{B}^N_s+\mathcal{P}^N_s-\mathcal{R}_s^N,\quad
cH(s)= \mathcal{B}_s+\mathcal{P}_s-\mathcal{R}_s,
\end{align*}
where $\mathcal{R}_s^N$ (resp. $\mathcal{R}_s$) is defined by the second line of \eqref{eq:HN} (resp. of \eqref{eq:H}), 
\[ \mathcal{P}^N_s=\int_0^s\int_0^\infty z\overline{\Pi}^N(dr,dz),\quad \mathcal{P}_s=\int_0^s\int_0^\infty z\overline{\Pi}(dr,dz),\]
and $\mathcal{B}^N_s$ (resp. $\mathcal{B}_s$) is the remainer of $cH^N(s)$ (resp. of $cH(s)$). First we not that  $\mathcal{B}^N_\cdot\Rightarrow
\mathcal{B}_\cdot$ for the topology of uniform convergence (the limit is continuous). Next we introduce the decompositions
\begin{align*}
\mathcal{P}^N_{\delta,-}(s)&=\int_0^s\int_0^\delta z\overline{\Pi}^N(dr,dz),\quad \mathcal{P}^N_{\delta,+}(s)=\int_0^s\int_\delta^\infty z\overline{\Pi}^N(dr,dz),\\
\mathcal{P}_{\delta,-}(s)&=\int_0^s\int_0^\delta z\overline{\Pi}(dr,dz),\quad \mathcal{P}_{\delta,+}(s)=\int_0^s\int_\delta^\infty z\overline{\Pi}(dr,dz),\\
\mathcal{R}_{\delta,-}^N(s)&=\int_0^s\int_0^\delta\left(z-\frac{1}{N}+\inf_{r\le u\le s}Y^N(u)-Y^N(r)\right)^+\Pi^N(dr,dz),\\
\mathcal{R}_{\delta,+}^N(s)&=\int_0^s\int_\delta^\infty\left(z-\frac{1}{N}+\inf_{r\le u\le s}Y^N(u)-Y^N(r)\right)^+\Pi^N(dr,dz),\\
\mathcal{R}_{\delta,-}(s)&=\int_0^s\int_0^\delta\left(z+\inf_{r\le u\le s}Y(u)-Y(r)\right)\Pi(dr,dz),\\
\mathcal{R}_{\delta,+}(s)&=\int_0^s\int_\delta^\infty\left(z+\inf_{r\le u\le s}Y(u)-Y(r)\right)\Pi(dr,dz)\,.
\end{align*}
Let $\mathcal{C}^N(s):=\mathcal{P}^N_{\delta,+}(s)-\mathcal{R}_{\delta,+}^N(s)$, $\mathcal{C}(s)=\mathcal{P}_{\delta,+}(s)-\mathcal{R}_{\delta,+}(s)$.
We claim that  $\mathcal{C}^N(s)$ is tight (see Lemma \ref{DrPa} below), and that its limit is $\mathcal{C}(s)$ and it is continuous. So that the convergence is (locally) uniform in $s$.
Finally 
\[ \mathcal{R}_{\delta,-}^N(s)=-cH^N(s)+\mathcal{B}^N_s+\mathcal{C}^N(s)+\mathcal{P}_{\delta,-}(s)\,.\]
The sum of the first three terms on the right is tight and converges locally uniformly in $s$ towards its continuous limit $-cH(s)+ \mathcal{B}_s+\mathcal{C}(s)$, while the last term can be shown (so to speak ``by hand'') to be tight in $\mathcal{D}(\R_+)$.  Hence the right hand side is tight in $\mathcal{D}(\R_+)$, and so is the left hand side. Taking the weak limit in the last identity, we obtain that the limit of $ \mathcal{R}_{\delta,-}^N(s)$ is $ \mathcal{R}_{\delta,-}(s)$, which is our Lemma.
\end{proof}

We want to check the tightness of the sequence $\{\mathcal{C}^N, N\ge1\}$ using the Aldous criterion (see section 16, page 176 in \cite{Bill}). Let $\tau$ be a stopping time with value in $[0,s]$ and let $\epsilon>0$  be a real which will eventually go to zero.
\begin{lemma}\label{DrPa}
The sequence $\{\mathcal{C}^N, N\ge 1\}$ is tight in $\mathcal{D}(\R_+)$.
\end{lemma}
\begin{proof}
Recall the notations used in the previous proof. We have 
\begin{align*}
\mathcal{C}^N(s)&= \mathcal{C}_1^N(s) -  \mathcal{C}_2^N(s), \quad \mbox{where}
\\ \mathcal{C}_1^N(s)&= \int_0^s\int_\delta^\infty  \Upsilon_N(s,r,z) {\Pi}^N(dr,dz), \quad    \mathcal{C}_2^N(s)=2\int_0^{s}\mathbf{1}_{V^N_{r}=-1}dr \int_{\delta}^{\infty}    z\, \mu_N(dz),  \quad \mbox{with}
\\  \Upsilon_N(s,r,z) &=  \frac{1}{N}+ \left(Y^N(r)-\inf_{r\le u\le s}Y^N(u)\right) \wedge  \left( z-\frac{1}{N}  \right). 
\end{align*}
The tightness in $\mathcal{C}(\R_+)$ of the sequence $\mathcal{C}_2^N$ is rather clear. We have the following a priori estimates
\begin{align*}
&0\le \Upsilon_N(s,r,z) \le z, \quad \mbox{and if} \ r\le s <s',\\
&0\le \Upsilon_N(s',r,z) -  \Upsilon_N(s,r,z)\le Y^N(s)-\inf_{s\le u\le s'}Y^N(u).
\end{align*}
We now verify Aldous's criterion.
The first condition (see (16.22) in \cite{Bill}) follows easily from the inequality
\[\mathcal{C}_1^N(s) \le  \int_0^s\int_\delta^\infty  z\, {\Pi}^N(dr,dz). \]
We next want to establish the second condition (see (16.23) in \cite{Bill}), which will follow from the fact that for all $\eta>0$,
\[\lim_{\epsilon\rightarrow 0} \limsup_{N\rightarrow+\infty} \P \left(  \left|\mathcal{C}_1^N(\tau +\epsilon)-\mathcal{C}_1^N(\tau)  \right| > \eta \right)=0. \]
In order to verify this condition, we first note that
\begin{align*}
0\le \mathcal{C}_1^N(\tau +\epsilon)-\mathcal{C}_1^N(\tau)= & \int_{\tau}^{\tau+\epsilon}\int_\delta^\infty  \Upsilon_N(\tau+\epsilon,r,z)  {\Pi}^N(dr,dz)\\&+   \int_{0}^{\tau}\int_\delta^\infty \left[  \Upsilon_N(\tau+\epsilon,r,z) -  \Upsilon_N(\tau,r,z)\right]  {\Pi}^N(dr,dz)\\
\le&  \int_{\tau}^{\tau+\epsilon}\int_\delta^\infty z\, {\Pi}^N(dr,dz) + \left(Y^N(\tau)-\inf_{\tau \le u\le \tau +\epsilon}Y^N(u) \right) \Pi^N([0,\tau]\times [\delta,+\infty)).
\end{align*}
Now using the Portmanteau theorem, Corollary \ref{V1} and Markov's inequality, we deduce that
 \begin{align*}
 \limsup_{N\rightarrow+\infty} \P \left(  \left|\mathcal{C}_1^N(\tau +\epsilon)-\mathcal{C}_1^N(\tau)  \right| > \eta \right) \le & \frac{4}{\eta} \epsilon \int_{\delta}^\infty z\mu(dz)\\&+  \P \left(\left(Y(\tau)-\inf_{\tau\le u\le \tau +\epsilon}Y(u) \right) \Pi([0,\tau]\times [\delta,+\infty)) > \frac{\eta}{2} \right).
 \end{align*}
 However, using the strong Markov property of $Y$, we obtain
 \[\E \left\{\left(Y(\tau)-\inf_{\tau\le u\le \tau +\epsilon}Y(u) \right) \Pi([0,\tau]\times [\delta,+\infty)) \right\} =  \E\left(-\inf_{0\le u\le \epsilon}Y(u) \right) \E \left\{\Pi([0,\tau]\times [\delta,+\infty)) \right\}.\]
Combining this with the previous inequality and Markov's inequality, it follows that 
 \begin{align*}
 \limsup_{N\rightarrow+\infty} \P \left(  \left|\mathcal{C}_1^N(\tau +\epsilon)-\mathcal{C}_1^N(\tau)  \right| > \eta \right) \le & \frac{4}{\eta} \epsilon \int_{\delta}^\infty z\mu(dz)\\&+ \frac{2}{\eta} \E\left(-\inf_{0\le u\le \epsilon}Y(u) \right) \E \left\{\Pi([0,\tau]\times [\delta,+\infty)) \right\}
 \\&\xrightarrow[\epsilon \rightarrow 0] \ 0\,.
 \end{align*}
 thanks to the monotone convergence theorem. 
\end{proof}

\subsection{Convergence of the local time of the approximate height process}
The aim of this subsection is to pass to the limit in the process $\{L_s^N(H^N(s)), N\ge1\}$.  The proof of Theorem \ref{le:unifconvL} is carried out in two major parts. Recall \eqref{TLint} and Proposition \ref{Tanaka}. The first part (Proposition \ref{LNst-sfix}) provides the tightness of the sequence $\left\{ L_s^N(t),  t\ge 0 \right\}_{N\ge 1}$, for each $s\ge 0$ fixed, the second part (Lemma \ref{le:unifcontL}) establishes that the mapping $s\mapsto L_s(t)$ is continuous, uniformly in $t$.

We shall need below the
\begin{lemma}\label{kabe}
For any $s\ge0$, as $N\longrightarrow +\infty$,
\begin{equation*}
  \int_0^s\mathbf{1}_{\{H^N(r)>t\}} \mathbf{1}_{\{V_{r}^{N}=-1\}}dr\Rightarrow \frac{1}{2} \int_0^s  \mathbf{1}_{\{H(r)>t\}} dr.
\end{equation*}
\end{lemma}
 \begin{proof}
 Imitating again the proof of Lemma \ref{Ssur2int}, we have 
 \begin{align*}
 2\int_0^s \mathbf{1}_{\{H^N(r)>t\}}  \mathbf{1}_{\{V_{r}^{N}=-1\}}dr -\int_0^s\mathbf{1}_{\{H^N(r)>t\}}  dr
 &= \frac{1}{2N}\int_0^s\mathbf{1}_{\{H^N(r)>t\}} \frac{dH^N(r)}{dr} dr\\
 &=\frac{1}{2N}(H^N(s)-t)^+\, ,
\end{align*}
which clearly tends to $0$ in probability, as $N\to\infty$. It thus remains to take the limit in the sequence
$\{ \int_0^s\mathbf{1}_{\{H^N(r)>t\}} dr\}_{N\ge1}$.
For any $\delta >0$, we set $$\Gamma_\delta= \{r \in (0,s) ; |H(r)-t| \le \delta \}.$$  It follows from the properties of the process $H$ that Leb$(\Gamma_\delta)\to0$ a.s. as $\delta\to0$, where Leb$(A)$ denotes the Lebesgue measure of the set $A$.
We have 
\begin{align*}
\int_0^s \mathbf{1}_{\{H^N(r)>t\}}  dr &=   \int_{\Gamma_\delta}  \mathbf{1}_{\{H^N(r)>t\}}   dr  +  \int_{(0,s) \backslash \Gamma_\delta}  \mathbf{1}_{\{H^N(r)>t\}}dr, \\ 
\int_0^s  \mathbf{1}_{\{H(r)>t\}} dr&= \int_{\Gamma_\delta}  \mathbf{1}_{\{H(r)>t\}}  dr+ \int_{(0,s) \backslash \Gamma_\delta}  \mathbf{1}_{\{H(r)>t\}}  dr .
\end{align*}
 The two first terms on the right hand sides are dominated by Leb$(\Gamma_\delta)$. But for $r\in(0,s) \backslash \Gamma_\delta$,  
$\mathbf{1}_{\{H(r)>t\}} =g_\delta(H(r)-t)$, where $g_\delta$ is the continuous function from $\R$ into $[0,1]$ given as
\[ g_\delta(x)=1\wedge[\delta^{-1}(x+\delta/2)^+].\]
We clearly have
\[ \int_{(0,s) \backslash \Gamma_\delta}  \mathbf{1}_{\{H(r)>t\}}  dr=\int_{(0,s) \backslash \Gamma_\delta}  g_\delta(H(r)-t)dr,\
\text{and }\int_{(0,s) \backslash \Gamma_\delta}  g_\delta(H^N(r)-t)dr\Rightarrow\int_{(0,s) \backslash \Gamma_\delta}  g_\delta(H(r)-t)dr . \]
But since $\mathbf{1}_{\{H^N(r)>t\}}$ and $g_\delta(H^N(r)-t)$ differ only when $|H^N(r)-t|\le\delta/2$, if $h_\delta$ is a continuous function from $\R$ into $[0,1]$  which equals $1$ on $[-\delta/2,\delta/2]$, and $0$ outside $[-2\delta/3,2\delta/3]$, and
$k_\delta$ a function from $\R$ into $[0,1]$  which equals $0$ on $[-2\delta/3,2\delta/3]$ and $1$ outside $[-\delta,\delta]$,
\begin{align*}
\left|\int_{(0,s) \backslash \Gamma_\delta}  \mathbf{1}_{\{H^N(r)>t\}}dr-\int_{(0,s) \backslash \Gamma_\delta}  g_\delta(H^N(r)-t)dr\right|
\le\int_0^sh_\delta(H^N(r)-t)k_\delta(H(r)-t)dr,
\end{align*}
which tends to $0$ in probability.
 
\end{proof}

We shall also need below the
\begin{lemma}\label{in54}
For any $s>0$, $t>0$, the following identities hold a.s.
\begin{align*}
(H^N(s)-t)^+ &= 2N \int_0^s V_r^N \mathbf{1}_{\{ H^N(r) >t\}}dr\\
V_s^N \mathbf{1}_{\{ H^N(s) >t\}} &= 2N L_s^N(t) +  \int_0^s \mathbf{1}_{\{ H^N(r) >t\}}dV_r^N.
\end{align*}
 \end{lemma}
 \begin{proof}
 The proof is essentially the same as that of lemma 5.4 in \cite{LePW}.
 \end{proof}

 A first preparation for the proof of Theorem \ref{le:unifconvL} below is
 \begin{proposition}\label{LNst-sfix}
 For each $s\ge 0$ fixed, $\left\{ L_s^N(t),  t\ge 0 \right\}_{N\ge 1}$ is tight in $\mathcal{D}([0,+\infty))$.
 \end{proposition}
\begin{proof} 
Writing the second line of Lemma \ref{in54} as
\[  L_s^N(t)= \frac{1}{2N} V_s^N \mathbf{1}_{\{ H^N(s) >t\}} - \frac{1}{2N}  \int_0^s \mathbf{1}_{\{ H^N(r) >t\}}dV_r^N,\]
and using \eqref{eq:VNs}, \eqref{BNS}, \eqref{MNs}, Lemma \ref{LNLNINF} and the first line of Lemma \ref{in54}, we deduce that for any $t \ge 0$, a. s. 
 \begin{equation}\label{2DFHNintRFT1}
\begin{split}
 L_s^N(t)=& A_s^N(t) -  \int_0^s \mathbf{1}_{\{H^N(r)>t\}} dB^N(r) +  \int_0^s \mathbf{1}_{\{H^N(r)>t\}} d\mathcal{M}_r^{\ast,N} + D_s^N(t),
  \end{split}
\end{equation}
where 
 \begin{equation}\label{eq:ANst}
\begin{split}
A_s^N(t)= c(H^{N}(s)-t)^++\frac{V_{s}^{N}}{2N} \mathbf{1}_{\{H^N(s)>t\}}  
-\frac{2}{N}\int_{\delta_N}^\infty (1-e^{-Nz})\mu(dz)\int_0^s\mathbf{1}_{\{H^N(r)>t\}}\mathbf{1}_{\{V_{r}^{N}=+1\}}dr\, , 
  \end{split}
  \end{equation}
 and 
   \begin{equation}\label{eq:DNst}
\begin{split}
D_s^N(t)=& \sum_{i>0, S^N_{i}\leq s} \mathbf{1}_{\{H^N(S^N_i)>t\}} \bigg( Z^N_i-\frac{1}{N}+\inf_{S^N_{i}\le r\le s}\left( Y^N(r)-Y^N(S^N_{i})\right)\bigg)^+\\& - \int_0^s \mathbf{1}_{\{H^N(r)>t\}} \int_{0}^\infty z\, \overline{\Pi}^N(dr,dz).
  \end{split}
  \end{equation}
  The proof is organized as follows. Step 1 establishes that the sequence $\{A_s^N(t), t\ge 0 \}_{N\ge 1}$ is tight, and any limit of a converging subsequence is a. s. continuous. Step 2 shows that as $N\rightarrow +\infty$,
\begin{equation}\label{etoile}
\sup_{t\ge 0} \Big| \int_0^s \mathbf{1}_{\{H^N(r)>t\}} d\mathcal{M}_r^{\ast,N} \Big| \longrightarrow 0.
\end{equation}
Step 3 establishes that the sequence 
$ \left\{ \int_0^s \mathbf{1}_{\{H^N(r)>t\}} dB^N(r),t\ge 0 \right\}_{N\ge 1} $ is tight, and any limit of a converging subsequence is a.s. continuous. Finally step 4 shows that the sequence $\{D_s^N(t), t\ge 0 \}_{N\ge 1}$ is tight as random elements of $\mathcal{D}([0,+\infty))$. The desired result follows by combining the above arguments with Proposition \ref{TenD2} below.
 
{\sc Step 1}. The tightness of two first terms of the right--hand side of \eqref{eq:ANst} is established in the same way as in the proof of Proposition 5.7 in \cite{LePW}. 
 The sup over all $t>0$ of the absolute value of the last term is easily show to go to $0$, as $N\to\infty$, again thanks to  {\textbf (A)}. 

{\sc  Step 2}. We first note that
  \[ \sup_{t\ge 0}  \left| \int_0^s \mathbf{1}_{\{H^N(r)> t\}} d\mathcal{M}_r^{\ast,N} \right|\le \left| \mathcal{M}_s^{\ast,N} \right| + \sup_{t\ge 0}  \left|   \int_0^s \mathbf{1}_{\{H^N(r)\le t\}} d\mathcal{M}_r^{\ast,N} \right|.\]
Thanks Lemma \ref{lemMet}, it remains to prove 
  \begin{equation*}
\sup_{t\ge 0} \Big| \int_0^s \mathbf{1}_{\{H^N(r)>t\}} d\mathcal{M}_r^{\ast,N} \Big| \longrightarrow 0 \ \mbox{as} \ N\to +\infty.
\end{equation*}
 To this end, we fix $s>0$, and consider the process 
  \[G^N(t) = \int_0^s \mathbf{1}_{\{H^N(r)\le t\}} d\mathcal{M}_r^{\ast,N}.\]
Let $\mathcal{G}_t^N$ denote the $\sigma$-algebra generated by the random variables
 \[\Theta _{g_N}= \int_0^s  g_N(r) d\mathcal{M}_r^{\ast,N},\]
 where $g_N$ is bounded and $\mathcal{P}$ measurable ($\mathcal{P}$ stands for the $\sigma$-algebra of predictable subsets of $\Omega \times \R_+$) and satisfies $\{g_N(r) = 0\} \supset \{H^N(r) > t\}$. We first establish the fact that $\{G^N(t), t\ge 0\}$ is a $\mathcal{G}_t^N$-martingale. To this end, it suffices to verify that $\E[(G^N(t^{\prime})- G^N(t))\Theta _{g_N}]=0$ for $t <t^{\prime}$ and any $g_N$ as above. Indeed, it's a product of two stochastic integrals with respect to $\mathcal{M}^{\ast,N}$, with two integrands whose product is zero. Let $T>0$. In order to finally establish \eqref{etoile}, we note that 
 \begin{align*}
 \E\left(\sup_{0\le t\le T} G^N(t) \right) \le &  \left(
  \E\sup_{0\le t\le T} \left(G^N(t) \right)^2   \right)^{\frac{1}{2}}\\
  \le & \left(  4  \E \left( \int_0^s \mathbf{1}_{\{H^N(r)\le T\}} d\mathcal{M}_r^{\ast,N} \right)^2   \right)^{\frac{1}{2}}\\
   \le & \left(  \frac{8}{N}  \E  \int_0^s \mathbf{1}_{\{H^N(r)\le T\}} dr    \int_{\delta_N}^\infty z\mu(dz) \right)^{\frac{1}{2}}\\
    \le & 2\sqrt{2s} \left( \frac{1}{N} \int_{\delta_N}^\infty z\mu(dz) \right)^{\frac{1}{2}},
\end{align*}
whose tends to $0$ as $N\to\infty$, thanks to {\textbf (A)}, where we have used Cauchy Schwarz's and Doob's inequalities. The desired result follows.

{\sc  Step 3}. The tightness of $ \left\{ \int_0^s \mathbf{1}_{\{H^N(r)>t\}} dB^N(r),t\ge 0 \right\}_{N\ge 1} $ is established in the same way as in the proof of Proposition 5.7 in \cite{LePW} which we do not reproduce here. On the other hand, we will adapt the idea of this proof to treat the tightness of the sequence $\{D_s^N(t), t\ge 0 \}_{N\ge 1}$.
  
{\sc  Step 4}.  Let $\delta>0$ be a real which will eventually go to zero. Using the identity $(b-a)^+-b=-(a\wedge b)$ for $a,b>0$, we can rewrite \eqref{eq:DNst} in the following 
\[ D_s^N(t)=   X_s^N(t)+ F_s^N(t),\]  
  where
     \begin{equation*}
\begin{split}
X_s^{N}(t) =& \int_0^s  \mathbf{1}_{\{H^N(r)>t\}}  \int_{0}^\delta \left(z  -\frac{1}{N}+\inf_{r\le u\le s } Y^N(u)-Y^N(r) \right)^+\Pi^N(dr, dz)\\
& -\int_{0}^{s}  \mathbf{1}_{\{H^N(r)>t\}} \int_{0}^{\delta} z \overline{\Pi}^N(dr,dz)\\
&-\int_0^s \mathbf{1}_{\{H^N(r)>t\}}  \int_{\delta}^\infty z \wedge \left( \frac{1}{N} - \Big[\inf_{r\le u\le s } Y^N(u)-Y^N(r)  \Big]\right)\Pi^N(dr, dz) , \quad \mbox{and}\\
 F_s^N(t)=&2\int_0^{s}  \mathbf{1}_{\{H^N(r)>t\}} \mathbf{1}_{V^N_{r}=-1}dr \int_{\delta}^\infty z \mu_N(dz). 
  \end{split}
  \end{equation*}
  From Lemmas \ref{conv-muN} and \ref{kabe}, it is easy to see that
 \[   F_s^N(t) \Longrightarrow   \int_0^s  \mathbf{1}_{\{H(r)>t\}} dr \int_{\delta}^\infty z\, \mu(dz) \ \mbox{as} \ N\to +\infty. \]
 Moreover, we have 
\[ \int_0^s  \mathbf{1}_{\{H(r)>t\}} dr=  \int_t^\infty L_s(u)du,\]   
which is a.s. continuous. So according to Proposition \ref{TenD2} below, it remains only to show  the sequence $\{X_s^{N}(t), t\ge0\}_{N\ge1}$ is tight as random elements of $\mathcal{D}([0,+\infty))$. To this end, we set
\begin{equation*}
  X_s^N(t)= -X^{N,\delta,1}_s(t)+X^{N,\delta,2}_s(t), 
\end{equation*}  
where
   \begin{equation}\label{eq:XNdelta}
\begin{split}
X^{N,\delta,1}_s(t) =&\int_0^s \mathbf{1}_{\{H^N(r)>t\}}  \int_{\delta}^{1/\delta}z \wedge \left( \frac{1}{N} - \Big[\inf_{r\le u\le s } Y^N(u)-Y^N(r) \Big]\right)\Pi^N(dr, dz),\\
X^{N,\delta,2}_s(t) =& \ C^{N,\delta}_s(t) -\int_{0}^{s}  \mathbf{1}_{\{H^N(r)>t\}}  \int_{0}^{\delta} z \overline{\Pi}^N(dr,dz) 
\quad \mbox{and}\\
C^{N,\delta}_s(t)=&  \int_0^s  \mathbf{1}_{\{H^N(r)>t\}}  \int_{0}^\delta \left( z -\frac{1}{N}+\inf_{r\le u\le s } Y^N(u)-Y^N(r) \right)^+\Pi^N(dr, dz) \\&+\int_0^s \mathbf{1}_{\{H^N(r)>t\}}  \int_{1/\delta}^\infty z \wedge \left( \frac{1}{N} - \Big[\inf_{r\le u\le s } Y^N(u)-Y^N(r) \Big]\right)\Pi^N(dr, dz). 
  \end{split}
  \end{equation}
To show the tightness of the sequence $\{X_s^{N}(t), t\ge0\}_{N\ge1}$, we first show that for all $\eta>0$,
\begin{equation}\label{XN2delta}
\limsup_{N\to\infty}\P\left(\sup_{t\ge0}|X^{N,\delta,2}_s(t)| >\eta\right) \xrightarrow[\delta \rightarrow 0] \ 0.
\end{equation}
 After we will prove that the sequence $\{X_s^{N,\delta,1}(t), t\ge0\}_{N\ge1}$ is tight as random elements of $\mathcal{D}([0,+\infty))$. Finally the desired result follows by combining this with Lemma \ref{le:tight_decomp} below. 
 In order to prove \eqref{XN2delta}, we first note that 
  \begin{equation*}
 \begin{split}
\E &\left( \sup_{t \ge 0}  \left| \int_{0}^{s}  \mathbf{1}_{\{H^N(r)>t\}}  \int_{0}^{\delta} z \overline{\Pi}^N(dr,dz)   \right|  \right)\\
&\le \E   \left| \int_{0}^{s} \int_{0}^{\delta} z \overline{\Pi}^N(dr,dz)   \right| + \E \left( \sup_{t \ge 0}  \left| \int_{0}^{s}  \mathbf{1}_{\{H^N(r)\le t\}}  \int_{0}^{\delta}z \overline{\Pi}^N(dr,dz)   \right|  \right)\\
&\le \left(  \E   \left( \int_{0}^{s} \int_{0}^{\delta} z \overline{\Pi}^N(dr,dz)   \right)^{2} \right)^{\frac{1}{2}} +  \left( \E \sup_{t \ge 0}  \left| \int_{0}^{s}  \mathbf{1}_{\{H^N(r)\le t\}}  \int_{0}^{\delta} z \overline{\Pi}^N(dr,dz)   \right|^2  \right)^{\frac{1}{2}}\\
&\le   \left(  2 \E \int_0^{s} \mathbf{1}_{V^N_{r}=-1}dr \int_{\delta}^\infty z^2 \, \mu_N(dz)  \right)^{\frac{1}{2}} + \left( 8\sup_{t \ge 0}  \E \int_{0}^{s}  \mathbf{1}_{\{H^N(r)\le t\}} \mathbf{1}_{V^N_{r}=-1}dr  \int_{0}^{\delta} z^2 \, \mu_N(dz)  \right)^{\frac{1}{2}}\\
&\le   \left(  2s \int_0^{\delta} z^2 \, \mu_N(dz)   \right)^{\frac{1}{2}} +   \left(  8s \int_0^{\delta}  z^2 \, \mu_N(dz)   \right)^{\frac{1}{2}} = 3\sqrt{2s} \sqrt{\int_0^{\delta}  z^2 \, \mu_N(dz)},
\end{split}
  \end{equation*}
 where we have used Cauchy Schwarz's and Doob's inequalities in the second and the third inequalities. From Markov's inequality, we deduce that
 \begin{equation}\label{ineq:N}
\limsup_{N\rightarrow+\infty} \P \left( \sup_{t \ge 0}  \left| \int_{0}^{s}  \mathbf{1}_{\{H^N(r)>t\}}  \int_{0}^{\delta} z \overline{\Pi}^N(dr,dz)   \right| \ge \frac{\eta}{2} \right)\le \frac{6}{\eta}\sqrt{2s} \sqrt{\int_0^{\delta}  z^2 \, \mu(dz)}  \xrightarrow[\delta \rightarrow 0] \ 0,
 \end{equation}
 since as $N\to\infty$, $\int_0^\delta z^2\mu_N(dz)\to\int_0^\delta z^2\mu_(dz)$, which follows from assumption $(\mathbf{A})$ and  the following formula, which is easily established by the same computation as done in the proof of Lemma \ref{conv-muN},
 \[ \int_0^\delta z^2\mu_N(dz)=\frac{1}{N}\int_{\delta_N}^\delta z\mu(dz)+\int_{\delta_N}^\delta z^2\mu(dz)\, .\]
However, recalling \eqref{eq:XNdelta}, we have 
 \begin{equation*}
 \sup_{t\ge0}|C^{N,\delta}_s(t)| 
 \le  \int_0^s\int_{0}^\delta \left( z -\frac{1}{N}+\inf_{r\le u\le s } Y^N(u)-Y^N(r) \right)^+\Pi^N(dr, dz) +\int_0^s \int_{1/\delta}^\infty z \Pi^N(dr, dz).
\end{equation*}
Now using the Portmanteau theorem, Lemma \ref{ParZeng0} and Markov's inequality, we deduce that
 \begin{align*}
\limsup_{N\rightarrow+\infty} \P \left(  \sup_{t\ge0}|C^{N,\delta}_s(t)| > \frac{\eta}{2} \right) \le&  \P \left(  \int_0^s\int_{0}^\delta \left(z+\inf_{r\le u\le s } Y(u)-Y(r) \right)^+\Pi(dr, dz)  > \frac{\eta}{4} \right)
\\&+ \frac{8}{\eta}s\int_{1/\delta}^\infty z\mu(dz).
 \end{align*}
 We deduce from Corollary 3.5 in \cite{LiPW}  and Markov's inequality that 
 \begin{align*}
\limsup_{N\rightarrow+\infty} \P \left(  \sup_{t\ge0}|C^{N,\delta}_s(t)| > \frac{\eta}{2} \right) \le \frac{4}{\eta}C(s)\int_{0}^\delta z^2\mu(dz) + \frac{8}{\eta}s\int_{1/\delta}^\infty z\mu(dz).
 \end{align*}
 with $C(s)= (\alpha\sqrt{s/c})\vee s$ and $\alpha=e/(e-1)$, which implies that 
 \begin{equation*}
\limsup_{N\rightarrow+\infty} \P \left(  \sup_{t\ge0}|C^{N,\delta}_s(t)| > \frac{\eta}{2} \right) \le \frac{4}{\eta}C(s)\int_{0}^\delta z^2\mu(dz) + \frac{8}{\eta}s\int_{1/\delta}^\infty z\mu(dz)\xrightarrow[\delta \rightarrow 0] \ 0.
\end{equation*}
Consequently, we obtain \eqref{XN2delta} by combining this with \eqref{ineq:N} and \eqref{eq:XNdelta}. It remains to prove that the sequence $\{X_s^{N,\delta,1}(t), t\ge0\}_{N\ge1}$ is tight as random elements of $\mathcal{D}([0,+\infty))$. To this end, we show that the sequence $\{X_s^{N,\delta,1}(t), t\ge0\}_{N\ge1}$ satisfies the conditions of Proposition \ref{TenD1} below. The first condition follows easily from the fact that
\[\limsup_{N\rightarrow+\infty} \E \left(X_s^{N,\delta,1}(t)\right)\le 2s \int_{\delta}^{1/\delta} z\, \mu(dz). \] 
In order to verify the second condition, we will show that for any $T>0$, there exists $C>0, \theta>1$ such that for any $0<t<T, h>0$,
\begin{equation*}
 \E \left( \left|X_{s}^{N,\delta,1}(t+h) -X_s^{N,\delta,1}(t)\right| \left|X_s^{N,\delta,1}(t) - X_s^{N,\delta,1}(t-h)\right| \right) \le C h^\theta.
 \end{equation*}
   In order to simplify the notations below we let 
\begin{align*}
 a^{+,N}_s(t) := \mathbf{1}_{\{ t<H^N(s)\le t+h\}}, \quad \mbox{and}\quad 
 a^{-,N}_s(t) := \mathbf{1}_{\{ t-h<H^N(s)\le t\}}.
\end{align*}
An essential property, which will be crucial below, is that $a^{+,N}_s(t)\times  a^{-,N}_s(t)  = 0$. Also $(a^{+,N}_s(t))^2 = a^{+,N}_s(t)$, and similarly for $a^{-,N}$. Thus, we have  
\begin{align*}
0\le X_{t}^{N,\delta,1} - X_{t+h}^{N,\delta,1} &\le  \int_0^{s} a^{+,N}_r(t) \int_{\delta}^{\infty}  z\, {\Pi}^N(dr,dz)\\
 &= \int_0^{s} a^{+,N}_r(t) \int_{\delta}^{1/\delta}   z\,  \overline{\Pi}^N(dr,dz) + 2   \int_0^{s} a^{+,N}_r(t)  \mathbf{1}_{V^N_{r}=-1}dr \int_{\delta}^{1/\delta}    z\, \mu_N(dz),
\end{align*}
and
\begin{align*}
0\le X_{t-h}^{N,\delta,1}-X_{t}^{N,\delta,1}&\le  \int_0^{s} a^{-,N}_r(t) \int_{\delta}^{1/\delta}  z\, {\Pi}^N(dr,dz)\\
 &=\int_0^{s} a^{-,N}_r(t) \int_{\delta}^{1/\delta}   z\,  \overline{\Pi}^N(dr,dz) + 2   \int_0^{s} a^{-,N}_r(t)  \mathbf{1}_{V^N_{r}=-1}dr \int_{\delta}^{1/\delta}  z\, \mu_N(dz).
\end{align*}
Because $a^{+,N}_s(t)\times  a^{-,N}_s(t)  = 0$, the expectation of the product of 
\[ \int_0^{s} a^{-,N}_r(t) \int_{\delta}^{1/\delta}   z\,  \overline{\Pi}^N(dr,dz) \quad \mbox{with} \quad \int_0^{s} a^{-,N}_r(t) \int_{\delta}^{1/\delta}   z\,  \overline{\Pi}^N(dr,dz) \]
vanishes. We only need to estimate the expectations
\begin{equation}\label{eq:UN}
\E \left( \int_0^{s}  a^{+,N}_r(t)\int_{\delta}^{1/\delta}   z\, \overline{\Pi}^N(dr,dz) \times \int_0^{s}  a^{-,N}_r(t)  \mathbf{1}_{V^N_{r}=-1} dr \right),
\end{equation}
\begin{equation*}
\E \left( \int_0^{s}  a^{-,N}_r(t)\int_{\delta}^{1/\delta}    z\, \overline{\Pi}^N(dr,dz) \times \int_0^{s}  a^{+,N}_r(t)  \mathbf{1}_{V^N_{r}=-1} dr \right),
\end{equation*}
and
\begin{equation}\label{eq:DEUX}
\E \left( \int_0^{s} a^{+,N}_r(t)  \mathbf{1}_{V^N_{r}=-1} dr \times \int_0^{s}  a^{-,N}_r(t)  \mathbf{1}_{V^N_{r}=-1} dr \right).
\end{equation}
Since the two first equations are symmetrical, we will only estimate \eqref{eq:UN}. To this end, we use the Cauchy-Schawarz inequality and Lemma \ref{Pun} below,
\begin{align*}
&\E \left( \int_0^{s} a^{+,N}_r(t) \int_{\delta}^{1/\delta} z\, \overline{\Pi}^N(dr,dz) \times \int_0^{s}  a^{-,N}_r(t)  \mathbf{1}_{V^N_{r}=-1} dr \right) 
\\&\le  \left(  2\E  \int_0^{s}  a^{+,N}_r(t) dr \int_{\delta}^{1/\delta} z^2 \mu_N(dz) \right)^{\frac{1}{2}}  \left( \E \left( \int_0^{s}  a^{-,N}_r(t) dr \right)^2 \right)^{\frac{1}{2}} \le Ch^{3/2}. 
\end{align*}
Finally \eqref{eq:DEUX}, again from Lemma \ref{Pun} with $t^{\prime}=t+h$,
\begin{equation*}
\E \left( \int_0^{s} a^{+,N}_r(t) dr \times \int_0^{s}  a^{-,N}_r(t) dr \right) \le Ch^2.
\end{equation*}
We now conclude that the sequence $\{X_s^{N,\delta,1}(t), t\ge0\}_{N\ge1}$ is tight as random elements of $\mathcal{D}([0,+\infty))$. The desired result follows.
\end{proof}

Recall \eqref{VC}. For $K>0$, let $\tau^N_K$ be the time of the first jump of $Y^N$ of size greater than or equal to $K$. 
\begin{lemma}\label{LYNs}
Let $s, K>0$. Then there exists a constant $C$ such that for all $N\ge1$
\[ \E \left(  \sup_{0\le r < s\wedge \tau^N_K}\left|Y^N(r) \right|^2 \right) \le C. \]
\end{lemma}   
 \begin{proof}
 By combining \eqref{eq:EpsiN}, \eqref{MNs} and \eqref{VC}, it is easy to obtain that 
 \begin{equation}\label{VC2}
Y^N(s)= - \frac{V_{s}^{N}}{2N} + B^N(s) - \mathcal{M}_s^{\ast,N} +\int_0^s  \int_{\delta_N}^\infty z\, \overline{\Pi}^N(dr,dz) . 
 \end{equation}
 It follows that 
\begin{align*}
 \sup_{0\le r < s\wedge \tau^N_K}\left|Y^N(r) \right|^2 \le& \frac{4}{2N^2}+ 4  \sup_{0\le r < s}\left|\mathcal{M}_r^{\ast,N} \right|^2 + 4  \sup_{0\le r < s}\left|B^N(r) \right|^2 + 4 \left| \int_0^{ s\wedge \tau^N_K}  \int_{0}^\infty z\, \overline{\Pi}^N(dr,dz) \right|^2.
\end{align*}
From an easy adaptation of the argument used in STEP 1 in the proof of Proposition \ref{LNst-sfix}, the expectation of the last term on the right hand--side tends to $0$ as $N \to +\infty$. We now use Doob's $L^2$ inequality for martingales, which yields that there exists constant $C_2$ such that for any martingale $M$,
\[ \E \left( \sup_{0\le r \le s}\left|M_r\right|^2\right) \le C_2 \E\left(\left|M_s \right|^2 \right).\]  
Recall \eqref{DIS} and \eqref{BNS}. Hence, it suffices to notice that   
\[ \E\left(\left|B^N(s) \right|^2\right) \le 2c s, \quad   \E\left(\left|\mathcal{M}^{\ast,N}_s \right|^2 \right) \le \frac{2s}{N}\int_{\delta_N}^{\infty}z\mu(dz) , \]
\[ \left|  \int_0^{(s\wedge \tau^N_K)^-} \int_{0}^\infty z\, \overline{\Pi}^N(dr,dz)  \right| \le \sup_{r\le s} \left|  \int_0^{r}\int_{0}^K z\, \overline{\Pi}^N(du,dz)  \right|,  \]
\[\mbox{and} \quad \E \left(\left| \int_0^{s}\int_{0}^K z\, \overline{\Pi}^N(du,dz)  \right|^2 \right)  \le 2s  \int_{0}^K z^2\, \mu_N(dz). \]
The desired result follows by combining the above results.
\end{proof}

\begin{lemma}\label{Pinter}
Let $s, K>0$. Then there exists a constant $C$ such that for all $N\ge1$
\[ \sup_{t\ge0} \E [(L^N_{s\wedge \tau^N_K}(t))^2 ] \le C, \]
where $\tau^N_K$ is defined above.
\end{lemma}   
 \begin{proof}
 From Lemmas \ref{LN0INF} and \ref{LNLNINF}, we can rewrite  \eqref{2DFHNintnew} and \eqref{2DFHNintRFT1} in the following form 
 \begin{align}\label{HNsfix}
 cH^{N}(s) =&Y^N(s) -\inf_{0\le r\le s}Y^N(r) - \int_0^s\int_{0}^\infty   \left(z -\frac{1}{N}+ \inf_{r\leqslant u \leqslant s}Y^N(u)-Y^N(r) \right)^+\Pi^N(dr, dz), 
\end{align}
 \begin{equation}\label{LNsfix}
\begin{split}
 L_s^N(t)=& \Gamma^N(s,t)+ c(H^{N}(s)-t)^+ \\&+\int_0^s\int_{0}^\infty  \mathbf{1}_{\{H^N(r)>t\}} \left( z -\frac{1}{N}+ \inf_{r\leqslant u \leqslant s}Y^N(u)-Y^N(r) \right)^+\Pi^N(dr, dz),
  \end{split}
\end{equation}
where 
 \begin{equation*}
\begin{split}
\Gamma^N(s,t)= &\frac{V_{s}^{N}}{2N} \mathbf{1}_{\{H^N(s)>t\}} -  \int_0^s \mathbf{1}_{\{H^N(r)>t\}} dB^N(r) +  \int_0^s \mathbf{1}_{\{H^N(r)>t\}} d\mathcal{M}_r^{\ast,N} \\& -\int_0^s \mathbf{1}_{\{H^N(r)>t\}} \int_{0}^\infty z\, \overline{\Pi}^N(dr,dz) . 
  \end{split}
  \end{equation*}
From an adaption of the argument of proof of Lemma \ref{LYNs}, we have that there exists a constant $C$ such that for all $N\ge1$
\begin{equation}\label{AM} \sup_{t\ge0} \E \left( \sup_{0\le r \le s\wedge \tau^N_K}\left|\Gamma^N(r,t) \right|^2 \right) \le C. 
\end{equation}
We now estimate the last term on the  right of \eqref{LNsfix}. It is clear that 
 \begin{equation*}
\begin{split}
&\int_0^s\int_{0}^\infty  \mathbf{1}_{\{H^N(r)>t\}} \left( z -\frac{1}{N}+ \inf_{r\leqslant u \leqslant s}Y^N(u)-Y^N(r) \right)^+\Pi^N(dr, dz) \\&\le \int_0^s\int_{0}^\infty  \left( z -\frac{1}{N}+ \inf_{r\leqslant u \leqslant s}Y^N(u)-Y^N(r) \right)^+\Pi^N(dr, dz) \\&\le Y^N(s)-  \inf_{0\le r\le s} Y^N(r) \le 2  \sup_{0\le r\le s} |Y^N(r)|.
  \end{split}
\end{equation*}
Next we observe that 
  \begin{equation*}
\begin{split}
c(H^{N}(s)-t)^+\le cH^{N}(s)\le Y^N(s) -\inf_{0\le r\le s}Y^N(r) \le
2 \sup_{0\le r< s} |Y^N(r)|.
  \end{split}
\end{equation*}
From the last two inequalities, 
  \begin{equation*}
\begin{split}
&  \int_0^{ s\wedge \tau^N_K}\int_{\delta_N}^\infty  \mathbf{1}_{\{H^N(r)>t\}} \left( z -\frac{1}{N}+ \inf_{r\leqslant u \leqslant s}Y^N(u)-Y^N(r) \right)^+\Pi^N(dr, dz) \\&+ c(H^{N}( s\wedge \tau^N_K)-t)^+
\le
4 \sup_{0\le r < s\wedge \tau^N_K} |Y^N(r)|.
  \end{split}
\end{equation*}
The desired result follows by combining this with \eqref{AM}, \eqref{LNsfix} and Lemma \ref{LYNs}.
\end{proof}

\begin{lemma}\label{Pun}
Let $s,h,T>0$. Then there exists a constant $C$ such that for all $N\ge1$ and $0<t, t^{\prime}<T$,
\begin{align*}
&\E\left(\int_0^s a^{-,N}_r(t) dr \right) \le Ch,\\
&\E\left(\int_0^s a^{-,N}_r(t) dr \int_0^s a^{-,N}_r(t^{\prime}) dr \right) \le Ch^2.
\end{align*}
\end{lemma} 
 \begin{proof}
 We will prove the second inequality, the first one follows from the second one with $t=t^{\prime}$ and the Cauchy-Schwarz inequality. We have
 \begin{align*}
\E\left(\int_0^s a^{-,N}_r(t) dr \int_0^s a^{-,N}_r(t^{\prime}) dr \right)=& \int_{t-h}^{t} \int_{t^{\prime}-h}^{t^{\prime}} \E[L_s^N(r) L_s^N(u)] dr du\\
\le& h^2 \sup_{0\le r\le T} \E[(L_s^N(r))^2],
 \end{align*}
 where we can replace $s$ by $s\wedge \tau^N_K$. Hence the desired result follows by combining this with Lemma \ref{Pinter}. 
 \end{proof}
 
We are now ready to state the main result of this subsection. Recall \eqref{TLint} and Proposition \ref{Tanaka}.
\begin{theorem}\label{le:unifconvL}
For any $s>0$,  as $N \rightarrow \infty$, \[ \{ L_s^N(t), \ t>0\}\Longrightarrow \{L_s(t),\ t>0\} \  \mbox{in} \  \mathcal{D}([0,+\infty)), \  \mbox{locally uniformly in} \ s,\] where $L_s(t)$ is for any $s>0$, $t\ge 0$ the local time accumaled by $H$, solution of \eqref{solB}.
\end{theorem}
We shall need 
\begin{lemma}\label{le:unifcontL}
For any $T>0$, the mapping $s\mapsto L_s(t)$ is continuous, uniformly for $t\in[0,T]$.
\end{lemma}
\begin{proof}
We need to show both that for any decreasing sequence $s_n\downarrow s$, $L_{s_n}(t)-L_s(t)\downarrow0$ uniformly for $t\in[0,T]$, and for any increasing sequence $s_n\uparrow s$, $L_s(t)-L_{s_n}(t)\downarrow0$ uniformly for $t\in[0,T]$. Both statements can be proved by exactly the same argument, so we establish the first statement.

For each $n\ge1$, $t\mapsto L_{s_n}(t)-L_s(t)$ is cadlag, with only positive jumps. Consequently it is upper semi--continuous. Moreover, since $s\mapsto L_s(t)$ is continuous and increasing for any $t\in[0,T]$, $L_{s_n}(t)-L_s(t)\downarrow0$ for all $t\in[0,T]$. For any $\eps>0$, let
\[ V_n(\eps):=\{t\in[0,T];\ L_{s_n}(t)-L_s(t)<\eps\}\, .\]
Since $t\mapsto L_{s_n}(t)-L_s(t)$ is u.s.c., $V_n(\eps)$ is an open subset of $[0,T]$. However, $\cup_{n\ge1}V_n(\eps)=[0,T]$, hence there exists $N_\eps\ge1$ such that $\cup_{n\le N_\eps}V_n(\eps)=[0,T]$, and since $n\mapsto V_n(\eps)$ is increasing, $V_{N_\eps}(\eps)=[0,T]$, and for any $n\ge N_\eps$, $t\in[0,T]$, 
$L_{s_n}(t)-L_s(t)<\eps$, which establishes the result.
\end{proof}

We are now prepared to complete the

$\bf{Proof \ of \  Theorem}$  \ref{le:unifconvL} : 
For $k\ge1$, $0\le i\le [2^k\overline{s}]$, we let $s^k_i:=i2^{-k}$. Thanks to Proposition \ref{LNst-sfix}, for each pair $(i,k)$, $\{L^N_{s^k_i}(\cdot),\ N\ge1\}$ is tight in $\mathcal{D}([0,T])$. Hence along a appropriate subsequence, 
jointly for all $k\ge1$, 
\[\left(L^N_{s^k_0}(\cdot), L^N_{s^k_1}(\cdot),\ldots,L^N_{s^k_{[2^k\overline{s}]}}(\cdot))\right)\Rightarrow
\left(L_{s^k_0}(\cdot), L_{s^k_1}(\cdot),\ldots,L_{s^k_{[2^k\overline{s}]}}(\cdot))\right)\]
in $\mathcal{D}([0,T])^{[2^k\overline{s}]+1}$.
From a theorem due to Skorohod, we can and do assume that those convergences hold a.s. This means that for any $(i,k)$, 
\[ \sup_{0\le t\le T}| L^N_{s^k_i}(\lambda_N(t))- L_{s^k_i}(t)|\to 0,\]
as $N\to\infty$, where for each $N\ge1$, $\lambda_N:[0,T]\mapsto[0,T]$ is 
continuous increasing, satisfies $\lambda_N(0)=0$, $\lambda_N(T)=T$, and $\sup_{0\le t\le T}|\lambda_N(t)-t|\to0$ and $N\to\infty$. The time change $\lambda_N$ is precised in Lemma \ref{le:timechange} below. It displaces the jumps 
of $L^N_s(t)$ to those of $L_s(t)$. The $t$'s where those jumps happen do not depend upon $s$, this is why we can choose $\lambda_N$ independent 
of $(i,k)$.

Now choose $s\in[0,\overline{s}]$ arbitrary. For any $k\ge1$ arbitrarily large, there exists  $0\le i\le 2^k-1$ such that $s^k_i\le s\le s^k_{i+1}$. We have
\begin{align*}
L^N_{s^k_i}(\lambda_N(t))-L_{s^k_{i+1}}(t)&\le L^N_s(\lambda_N(t))-L_s(t)\le L^N_{s^k_{i+1}}(\lambda_N(t))-L_{s^k_{i}}(t)\\
L^N_{s^k_i}(\lambda_N(t))-L_{s^k_{i}}(t)+L_{s^k_{i}}(t)- L_{s^k_{i+1}}(t)&\le L^N_s(\lambda_N(t))-L_s(t)\le L^N_{s^k_{i+1}}(\lambda_N(t))-L_{s^k_{i+1}}(t)+L_{s^k_{i+1}}(t)-L_{s^k_{i}}(t)\,.
\end{align*}
We now choose an arbitrary $\eps>0$. Thanks to Lemma \ref{le:unifcontL}, we can choose $k$ large enough so that 
$L_{s^k_{i+1}}(t)-L_{s^k_{i}}(t)\le \eps/2$, for all $t\in[0,T]$. Hence we have
\begin{align*}
L^N_{s^k_i}(\lambda_N(t))-L_{s^k_{i}}(t)-\frac{\eps}{2}\le L^N_s(\lambda_N(t))-L_s(t)\le L^N_{s^k_{i+1}}(\lambda_N(t))-L_{s^k_{i+1}}(t)+\frac{\eps}{2}
\end{align*}
We can now choose $N$ large enough so that $\sup_{0\le i\le 2^k}\sup_{0\le t\le T}\left|L^N_{s^k_i}(\lambda_N(t))-L_{s^k_{i}}(t)\right|\le \eps/2$.
We then deduce that for such a $N$, for all $0\le s\le \overline{s}$, $0\le t\le T$, $-\eps \le L^N_s(\lambda_N(t))-L_s(t)\le \eps$, hence
\begin{align*}
\sup_{0\le s\le \overline{s}}\sup_{0\le t\le T}\left|L^N_s(\lambda_N(t))-L_s(t)\right|\le\eps\, .
\end{align*}
The result follows.
$\hfill \blacksquare$ 

\begin{lemma}\label{le:timechange}
Fix $T$ and $\overline{s}>0$, $k\ge1$ and the sequence $s^k_i=i2^{-k}$ for $0\le i\le [2^k\overline{s}]$.
There exists a random time change $\lambda_N(t)$ which is continuous and strictly increasing, such that, along an appropriate subsequence, 
\[ (\lambda_N(t),L^N_{s^k_0}(\lambda_N(t)),\ldots,L^N_{s^k_{[2^k\overline{s}]}}(\lambda_N(t)))\Rightarrow(t,L_{s^k_0}(t),\ldots,L_{s^k_{[2^k\overline{s}]}}(t)) \]
for the topology of uniform convergence on $[0,T]$.
\end{lemma}
\begin{proof} The proof will be divided in three steps. We will first define the sequence $\lambda_N$, then establish the convergence of $\lambda_N$, and finally that of $L^N_{s^k_i}(\lambda_N(t))$ for $i$ arbitrary. The fact that the above joint convergence holds along an appropriate subsequence then follows from the previous results.

{\sc Step 1}. We order the points of the measure $\Pi$ on the set $[0,\overline{s}]\times\R_+$ in decresing order of their second coordinate. This produces the sequence $\{(S_1,Z_1), (S_2,Z_2),\ldots\}$, where $Z_1>Z_2>\dots$. We associate to each $(S_i,Z_i)$ $T_i=H(S_i)$. We consider those $(T_i,Z_i)$ for which $T_i\le T$ (and delete the others). The corresponding sequence is still denoted by an abuse of notation $\{(T_1,Z_1), (T_2,Z_2), \ldots\}$.  $T_i$ is the values of $t$ at which the map $t\mapsto L_{\overline{s}}(t)$ has a jump of size $\le Z_i$. Note that for $0<s<\overline{s}$, $t\mapsto L_s(t)$ has a jump at time $T_i$ iff $S_i<s$, and $t\mapsto L_s(t)$ has jumps only at times where $t\mapsto L_{\overline{s}}(t)$ jumps. Moreover, for each $i\ge1$, there exists $s$ lare enough (possibly $>\overline{s}$) such that the jump of $t\mapsto L_{s'}(t)$ at $T_i$ is $Z_i$ for all $s'\ge s$.

Consider now the point measure $\Pi^N$, the associated $(S^N_i,Z^N_i)$, and $(T^N_i,Z^N_i)$, where $T^N_i=H^N(S^N_i)$. Again hose points are ordered in decreasing order of the $Z^N_i$'s, and only those $(T^N_i,Z^N_i)$ for which $T^N_i<T$ are taken into account. Since $\Pi^N\Rightarrow\Pi$, for each $k\ge1$, there exists $N_k$ such that for all $N\ge N_k$, the order of $(T^N_1,\ldots,T^N_k)$ is the same as that of $(T_1,\ldots,T_k)$. 

For each $k\ge1, N\ge1$ we choose as $\lambda_{N,k}$ the piecewise linear function of $t$ whose graph joins $(0,0)$, $(T_i,T^N_i)_{1\le i\le k}, (T,T)$, where the $T_i$'s are listed in increasing order. If $N\ge N_k$, then $\lambda_{N,k}(t)$ is continuous, strictly increasing and verifies $\lambda_{N,k}(0)=0$,  
$\lambda_{N,k}(T_i)=T^N_i$ for $1\le i\le k$ and $\lambda_{N,k}(T)=T$. For each $N\ge1$, we let $\lambda_N(t)=\lambda_{N,\hat{k}_N}(t)$, where
\[ \hat{k}_N=\sup\{k,\ \text{ the orders of }(T^N_i)_{1\le i\le k}\ \text{ and } (T_i)_{1\le i\le k}\ \text{ coincide}\}\,.\]

{\sc Step 2}. Since the limit $t$ of $\lambda_N(t)$ is deterministic, what we want to show is in fact that 
$\sup_{0\le t\le T}|\lambda_N(t)-t|$ tends to $0$ in probability. If $\mu(\R_+)<\infty$, then there are finitely many jumps, and the convergence $(T_i,T^N_i)\Rightarrow (T_i,T_i)$ is uniform w.r.t. $i$. The result follows. If however $\mu(\R_+)=+\infty$, then the $S_i$'s are dense in $[0,\overline{s}]$, and consequently the $T_i$'s are dense in $[0,\sup_{0\le s\le \overline{s}}H(s)\wedge T]$. For $k$ large enough, the distance between two consecutive $T_i$'s in the sequence $T_1,\ldots,T_k$ is less than $\eps$. Then for $N\ge N_k$, $\sup_{1\le i\le k}|T^N_i-T_i|\le\eps$, and the result follows. 

{\sc Step 3} We write $s$ for $s^k_i$. We assume here that all processes have been redefined in such a way that $(H^N,Y^N)\to(H,Y)$ in $\mathcal{C}(\R_+)\times\mathcal{D}(\R_+)$, and $(T^N_i,Z^N_i)_{i\ge1}\to(T_i,Z_i)_{i\ge1}$, in probability. According to \eqref{2DFHNintRFT1}, $L^N_s(t)=F^N_s(t)+ D^N_s(t)$, and combining the fact that $F^N_s(t)\to F_s(t)$ in probability uniformly in $t$ with STEP 2, we deduce that $F^N_s(\Lambda_N(t))\to F_s(t)$ uniformly in $t$ in probability. It remains to treat the term $D^N_s(\lambda_N(t))$. We now use a similar decomposition as in the proof of Lemma \ref{ParZeng0}.
\end{proof}

\begin{proposition}\label{BUT}
As $N \rightarrow \infty$, $L^N_s(H^N(s))\Rightarrow L_s(H(s)) \ \ ds$ a.e.
\end{proposition}
\begin{proof}
In order to simplify the following argument, making use of a famous theorem due to Skorohod, we may and do assume that $H^N\to H$ and $L^N\to L$ a.s.
From Proposition \ref{convHN}  $H^N(s)\to H(s)$ a.s., locally uniformly in $s$, and
according to Theorem \ref{le:unifconvL}, for all $\overline{s}>0$ and $T>0$,
\[ \sup_{0\le s\le \overline{s},\ 0\le t\le T}|L^N_s(\lambda_N(t))-L_s(t)|\to 0\quad\text{a.s.}, \ \text{as} \ N \to \infty,\]
then $ds$ a.e., $L^N_s(H^N(s))\to L_s(H(s))$ in probability. For this purpose, for any $0\le r\le \overline{s}$, we have
\[ \{t,\ t\mapsto L_r(t)\ \text{is discontinuous }\}\subset\{t,\ t\mapsto L_{\overline{s}}(t)\ \text{is discontinuous }\} \]
and the latter set is at most countable. Since $H(r)$ admit a local time, $H(r)$ 
spends zero time in such a countable set. Hence a.s., $dr$ a.e., $t\mapsto L_r(t)$ is continuous at $H(r)$.
For such an $r$, we have 
\[ | L^N_r(H^N(r))-L_r(H(r)) |\le | L^N_r(\lambda_N\circ\lambda_N^{-1}(H^N(r)))-L_r(\lambda_N^{-1}(H^N(r))) | + 
| L_r(\lambda_N^{-1}(H^N(r)))-L_r(H(r)) | \,,\]
where the random function $\lambda_N:[0,T]\mapsto[0,T]$ satisfying $\lambda_N(0)=0$, $\lambda_N(T)=T$, $\lambda_N$ is continuous and strictly increasing and $\sup_{0\le t\le T}|\lambda_N(t)-t|\to0$ is such that $\sup_{0\le t\le T}| L^N_r(\lambda_N(t))-L_r(t)|\to0$.
Define the event $\Omega_{\overline{s},T}=\{\sup_{0\le s\le \overline{s}}H(s)\le T-1\}$, 
\begin{align*}
\P\left( \Omega_{\overline{s},T}\cap\left\{| L^N_r(H^N(r))-L_r(H(r)) | >\eta\right\}\right)&\le \P\left(\sup_{0\le t\le T}|L^N_r(\lambda_N(t)t)-L_r(t)|>\eta/2\right)
\\&\quad
+\P\left(\sup_{0\le t\le T}|\lambda_N^{-1}(H^N(r))-H(r)|>1\right)
\\&\quad+\P\left(| L_r(\lambda_N^{-1}(H^N(r)))-L_r(H(r)) | >\eta/2\right)
\end{align*}
 As $N\to\infty$, the first term on the right tends to $0$ thanks to Lemma \ref{le:unifconvL}, the second term tends to $0$ since both 
 $H^N\to H$ uniformly on $[0,\overline{s}]$ and $\lambda^{-1}_N(t)\to t$ uniformly on $[0,T]$, and finally the third term tend to $0$ since $H(r)$ is a continuity point of $L_r(\cdot)$ and again $\lambda^{-1}_N(t)\to t$ uniformly on $[0,T]$. Finally, for each $\overline{s}>0$, $\P(\cup_{T>0}\Omega_{\overline{s},T})=1$.
 The result follows.
\end{proof}

\section{Convergence of the height process with interaction}\label{sec4}
In the nonlinear case where the linear drift $-bz$ is replaced by a nonlinear drift $f(z)$, the approximation of \eqref{XTPLUS} will be given by the total mass $X^{N,x}$ of a population of individuals, each of which has mass $1/N$. The initial mass is $X_0^{N,x}= [Nx]/N$, and $X^{N,x}$ follows a Markovian jump dynamics :  from its current state $k/N$,  
 \begin{equation*}\label{ZNx1}
X^{N,x} \  \mbox{jumps to}  \
\left\{
    \begin{array}{ll}
   \frac{k+\ell-1}{N} \ \mbox{at rate} \  \psi_{\delta_N}^{\prime} (N)\nu_{N}(\ell) k \ + \ N\mathbf{1}_{\{\ell=2 \}}\sum_{i=1}^{k} (f(\frac{i}{N})-f(\frac{i-1}{N}))^+,  \ \mbox{for all} \ \ell\ge2; &\\\\  \frac{k-1}{N}  \quad  \mbox{at rate} \  \psi_{\delta_N}^{\prime} (N)\nu_{N}(0) k \ + \ N\sum_{i=1}^{k} (f(\frac{i}{N})-f(\frac{i-1}{N}))^-.
    &
           \end{array}
           \right.
\end{equation*}  
  The following result is a consequence of Theorem 4.1 in \cite{DP}. 
\begin{proposition}\label{TH12}
 Suppose that Assumptions $(\bf H)$ and \eqref{hypof} are satisfied. Then, as $N\rightarrow +\infty$, $\{X_t^{N,x}, \ t\geqslant0 \}$ converges to $\{X_t^x, \ t\geqslant0 \}$ in distribution on $\mathcal{D}([0,\infty),\mathbb{R}_+)$, where $X^x$ is the unique solution of the SDE \eqref{XTPLUS}.  
\end{proposition}

The process $H^{N}$ is piecewise linear, continuous with derivative $\pm 2N$ : at any time $s\ge0,$ the rate of appearance of minima (giving rise to births, i.e. to the creation of new branches) is equal to 
\begin{equation*}
2cN^2+ 2\int_{\delta_N}^\infty(1-e^{-Nz}-Nze^{-Nz})\mu(dz)+ 2N^2 \left[ f\left(  L_s^{N}(H^{N}(s))+ \frac{1}{N} \right)-f\left(  L_s^{N}(H^{N}(s))\right) \right]^+,
\end{equation*}
and the rate of of appearance of maxima (describing deaths of branches) is equal to  
\begin{equation*}
2cN^2+2\int_{\delta_N}^\infty(e^{-Nz}-1+Nz)\mu(dz)+ 2N^2 \left[ f\left(  L_s^{N}(H^{N}(s))+ \frac{1}{N} \right)-f\left(  L_s^{N}(H^{N}(s))\right) \right]^-.
\end{equation*}
We now want to use Girsanov's theorem, in order to reduce the present model to the one studied in section 3. To this end, for $s>0,$ define 
\begin{equation}\label{PN1PNint}
\mathcal{P}_{s}^{1,N}= \int_{0}^{s}\mathbf{1}_{\{V_{r^{-}}^{N}=-1\}}dP_{r}^{N} \ \mbox{and} \ \mathcal{P}_{s}^{2,N}= \int_{0}^{s}\mathbf{1}_{\{V_{r^{-}}^{N}=+1\}}dP_{r}^{N},
\end{equation}
recall that  $P^{N}$ is a Poisson point process with intensity $2cN^2$ under the probability measure $\P,$ so that $ \mathcal{P}_{s}^{1,N}, (resp.,\mathcal{P}_{s}^{2,N})$ has the intensity 
\begin{equation*}
 \lambda_s^{1,N}= 2cN^2 \mathbf{1}_{\{V_{s^{-}}^{N}=-1\}}, \quad \mbox{resp.} \quad \lambda_s^{2,N}= 2cN^2\mathbf{1}_{\{V_{s^{-}}^{N}=+1\}}.
\end{equation*}
Recall \eqref{DIS}. We now define the collection of $\sigma$-algebras $\mathcal{F}_s^{N}:= \sigma\{H^N(r), \ 0\le r\le s\}$ and we introduce a Girsanov-Radon-Nikodym derivative 
\begin{equation}\label{UNsint}
 U_s^{N}= 1+ \int_0^s  U_{r^-}^{N} \left[(f_N^{\prime})^+ \left( L_{{r^-}}^{N}(H^N(r))\right)d\mathcal{M}_r^{1,N}  + (f_N^{\prime})^- \left( L_{{r^-}}^{N}(H^N(r))\right)d\mathcal{M}_r^{2,N}\right],
\end{equation}
with $ f_N^{\prime}(x)= N [f(x+1/N)-f(x)]. $ Under the additional assumption that $f^{\prime}$ is bounded, it is clear that $U^{N}$ is a martingale, hence $\E [ U_s^{N}]=1$ for all $s\ge0.$ In this case, we define $\widetilde{\mathbb{P}}^{N}$ as the probability such that for each $s>0,$ 
\begin{equation*}
\frac{d\widetilde{\mathbb{P}}^{N}}{d\P} \Big|_{ \mathcal{F}_s^{N}}= U_s^{N}.
\end{equation*}

It follows from Proposition \ref{Gir2} below with 
\begin{align*}
\mu_r^{1,N}&=1+\frac{1}{cN^2} \int_{\delta_N}^\infty(1-e^{-Nz}-Nze^{-Nz})\mu(dz)+ \frac{1}{cN} (f_N^{\prime})^+ \left( L_{{r^-}}^{N}(H^N(r))\right) \quad \mbox{and}\\
  \mu_r^{2,N}&=1+ \frac{1}{cN^2} \int_{\delta_N}^\infty(e^{-Nz}-1+Nz)\mu(dz)+\frac{1}{cN} (f_N^{\prime})^- \left(L_{{r^-}}^{N}(H^N(r))\right)
\end{align*}
that under $\widetilde{\mathbb{P}}^{N},$ $ \mathcal{P}_{s}^{1,N}$ ( resp., $\mathcal{P}_{s}^{2, N}$) has the intensity 
\begin{equation}\label{tauxTilde}
\begin{split}
&\left[2cN^2+2 \int_{\delta_N}^\infty(1-e^{-Nz}-Nze^{-Nz})\mu(dz)+2N(f_N^{\prime})^+ \left( L_{{r^-}}^{N}(H^N(r))\right)\right] \mathbf{1}_{\{V_{{r^-}}^{N}=-1\}},\\
\mbox{resp.} \ &\left[2cN^2+2\int_{\delta_N}^\infty(e^{-Nz}-1+Nz)\mu(dz) +2N(f_N^{\prime})^- \left( L_{{r^-}}^{N}(H^N(r))\right)\right] \mathbf{1}_{\{V_{{r^-}}^{N}=+1\}}.
\end{split}
\end{equation}
\subsection{\textit{The Case where $|f'|$ is bouned}}
We assume in this subsection that $|f'(x)|\le \beta$ for all $x\ge0$ and some $\beta>0.$ This constitutes the first step of the proof of convergence of $H^N.$ As explained at above, in this case we can use Girsanov's theorem to bring us back to the situation studied in section 3. 

\quad Recalling equations \eqref{BNS} and \eqref{VC}, we can rewrite \eqref{eq:HN} in the form 
\begin{equation*}
\begin{split}
c H^{N}(s) &= \mathcal{M}_s^{1,N}-\mathcal{M}_s^{2,N}+\mathcal{M}_s^{N}+ \epsilon^N(s) - \inf_{0\le r\le s}Y^N(r) -  \mathcal{R}^{N}(s).
 \end{split}
\end{equation*}
Moreover, from \eqref{DIS}, \eqref{PN1PNint} and \eqref{UNsint}, we have
\begin{align*}
\left[\mathcal{M}^{1,N}\right]_s&= \frac{1}{N^2} \mathcal{P}_{s}^{1,N}, \quad \left[\mathcal{M}^{2,N}\right]_s = \frac{1}{N^2} \mathcal{P}_{s}^{2,N}
\\
\langle{\mathcal{M}^{1,N}\rangle}_{s}&=2c \int_{0}^{s} \mathbf{1}_{\{V_{r}^{N}=-1\}}dr, \quad \langle{\mathcal{M}^{2,N}\rangle}_{s}=  2c \int_{0}^{s} \mathbf{1}_{\{V_{r}^{N}=+1\}}dr,\\
\left[U^{N}\right]_s = \frac{1}{N^2} \int_0^s  &\Big|U_{r^-}^{N}\Big|^2 \bigg[\bigg|(f_N^{\prime})^+ \left(L_{{r^-}}^{N}(H^{N}(r))\right) \bigg|^2  d\mathcal{P}_{r}^{1,N}\\
& + \bigg|(f_N^{\prime})^- \left( L_{{r^-}}^{N}(H^{N}(r))\right)\bigg|^2 d\mathcal{P}_{r}^{2,N}\bigg],\\
\langle{U^{N}\rangle}_{s} = 2 c \int_0^s & \Big|U_{r}^{N}\Big|^2 \bigg[\bigg| (f_N^{\prime})^+ \left( L_{{r}}^{N}(H^{N}(r))\right) \bigg|^2 \mathbf{1}_{\{V_{r}^{N}=-1\}}\\& + \bigg|(f_N^{\prime})^- \left(  L_{{r}}^{N}(H^{N}(r))\right)\bigg|^2 \mathbf{1}_{\{V_{r}^{N}=+1\}}\bigg] dr,
\\
\left[U^{N}, \mathcal{M}^{1,N}\right]_s &= \frac{1}{N^2}  \int_0^s U_{r^-}^{N} (f_N^{\prime})^+ \left( L_{{r^-}}^{N}(H^{N}(r))\right) d\mathcal{P}_{r}^{1,N},\\
\left[U^{N}, \mathcal{M}^{2,N}\right]_s &= \frac{1}{N^2} \int_0^s U_{r^-}^{N} (f_N^{\prime})^- \left( L_{{r^-}}^{N}(H^{N}(r))\right)d\mathcal{P}_{r}^{2,N},
\\
\langle{U^{N}, \mathcal{M}^{1,N}\rangle}_{s}  &= 2 c \int_0^s U_{r}^{N} (f_N^{\prime})^+ \left( L_{{r}}^{N}(H^{N}(r))\right) \mathbf{1}_{\{V_{r}^{N}=-1\}}dr,
\\
\langle{U^{N},\mathcal{M}^{2,N}\rangle}_{s}  &= 2 c \int_0^s U_{r}^{N} (f_N^{\prime})^- \left( L_{{r}}^{N}(H^{N}(r))\right)  \mathbf{1}_{\{V_{r}^{N}=+1\}}dr,
\end{align*}
while
\begin{equation*}
\left[\mathcal{M}^{1,N}, \mathcal{M}^{2,N}\right]_s= \langle{\mathcal{M}^{1,N},\mathcal{M}^{2,N}\rangle}_{s} = 0.
\end{equation*}
From Corollary \ref{MNversM}, Lemma \ref{M1NM2N} and Proposition \ref{TRIPLET}, we deduce that $\{(H^{N}, \mathcal{M}^{1,N}, \mathcal{M}^{2,N}, \mathcal{M}^{N},\mathcal{R}^{N}),\ N\ge1 \}$ is a tight sequence in $\mathcal{C}([0,\infty)) \times{(\mathcal{D}([0,\infty)))}^{4}.$ Since $f^{\prime}$ is bounded, the same is true for $ f_N^{\prime}(x)= N [f(x+1/N)-f(x)],$ uniformly with respect to $N.$ It easy to deduce from \eqref{UNsint} and Proposition \ref{TenD3} that the sequence $\{U^N, \ N\ge1\}$ is tight and as a consequence $\{(H^{N}, \mathcal{M}^{1,N}, \mathcal{M}^{2,N}, \mathcal{M}^{N},\mathcal{R}^{N}, U^N),\ N\ge1 \}$ is a tight sequence in $\mathcal{C}([0,\infty)) \times{(\mathcal{D}([0,\infty)))}^{5}.$ Therefore at least along a subsequence (but we do not distinguish between the notation for the subsequence and for the sequence),
\begin{equation*}
 (H^{N}, \mathcal{M}^{1,N}, \mathcal{M}^{2,N}, \mathcal{M}^{N},\mathcal{R}^{N}, U^N) \Rightarrow (H, \mathcal{M}^{1}, \mathcal{M}^{2}, \mathcal{M},\mathcal{R}, U)
\end{equation*}
as $N\rightarrow \infty$ in  $\mathcal{C}([0,\infty)) \times{(\mathcal{D}([0,\infty)))}^{5}.$ 

Moreover, from Lemma \ref{Ssur2int} and Proposition \ref{BUT}, we deduce that
\begin{align*}
\langle{\mathcal{M}^{1,N}\rangle}_{s}& \Rightarrow c s ,\\
\langle{\mathcal{M}^{2,N}\rangle}_{s}& \Rightarrow   c s,\\
\langle{U^{N}\rangle}_{s} & \Rightarrow  c \int_0^s  \Big|U_{r}\Big|^2 \times \bigg|f^{\prime} \left( L_{{r}}^{\Gamma}(H(r))\right) \bigg|^2  dr,
\\
\langle{U^{N}, \mathcal{M}^{1,N}\rangle}_{s}  & \Rightarrow c \int_0^s U_{r} f^{\prime +} \left(  L_{{r}}(H(r))\right) dr
\\
\langle{U^{N},\mathcal{M}^{2,N}\rangle}_{s}  & \Rightarrow c \int_0^s U_{r} f^{\prime -} \left(L_{{r}}(H(r))\right) dr.
\end{align*}
Recall Corollary \ref{MNversM}, Lemma \ref{M1NM2N} and \eqref{eq:H}. It follows from the above that Proposition \ref{TRIPLET} can be enriched as follows
\begin{proposition}\label{convU}
As $N\to \infty$, 
\begin{align*}
&\left(H^{N}, \mathcal{M}^{1,N}, \mathcal{M}^{2,N}, \mathcal{M}^{N},\mathcal{R}^{N}, U^N \right)
\Longrightarrow 
\left(H, \sqrt{c} B_{s}^{1}, \sqrt{c} B_{s}^{2}, \mathcal{M},\mathcal{R}, U\right), 
 \end{align*}
 in ${(\mathcal{C}([0,\infty)))} \times  {(\mathcal{D}([0,\infty)))}^{6},$ where $B^{1}$ and $B^{2}$ are two mutually independent standard Brownian motions. Moreover
\begin{align*}
cH(s)&= \sqrt{c} \left(B_{s}^{1}- B_{s}^{2}\right) + \int_{0}^{s}\int_{0}^{\infty}z \overline{\Pi}(dr, dz) -  \inf_{0\le r\le s}Y(r)  - \int_0^s\int_0^\infty\left(z + \inf_{r\leqslant u \leqslant s}Y(u)-Y(r) \right)^+\Pi(dr, dz),
\\ &\quad and \quad U_s=1+ \frac{1}{\sqrt{c}} \int_0^s U_{r} \bigg[ f^{\prime +} \left(L_{r}(H(r))\right) dB_r^1 + f^{\prime -} \left( L_{r}(H(r))\right) dB_r^2 \bigg].
\end{align*}
\end{proposition}
We clearly have
\begin{align*}
U_s= \exp& \bigg( \frac{1}{\sqrt{c}} \int_0^s \bigg\{  f^{\prime +} \left( L_{r}(H(r))\right) dB_r^1 + f^{\prime -} \left( L_{r}(H(r))\right) dB_r^2 \bigg\}-\frac{c}{2} \int_0^s  \bigg|f^{\prime} \left( L_{r}(H(r))\right) \bigg|^2  dr \bigg).
\end{align*}
Since $f^\prime$ is bounded $\E [ U_s]=1$ for all $s\ge0.$ Let now $\widetilde{\mathbb{P}}$ denote the probability measure such that 
\begin{equation*}
\frac{d\widetilde{\mathbb{P}}}{d\P} \Big|_{ \mathcal{F}_s}= U_s,
\end{equation*}
where $\mathcal{F}_s:= \sigma\{H(r), \ 0\le r\le s\}.$ It follows from Girsanov's theorem (see Proposition \ref{Gir1} below) that there exist two mutually independent standard $\widetilde{\mathbb{P}}$-Brownian motions $\widetilde{B}^1$ and $\widetilde{B}^2$ such that 
\begin{align*}
B_s^1&=  \frac{1}{\sqrt{c}} \int_0^s f^{\prime +} \left( L_{r}(H(r))\right) dr + \widetilde{B}_s^1\\
B_s^2&= \frac{1}{\sqrt{c}}  \int_0^s f^{\prime -} \left(  L_{r}(H(r))\right) dr + \widetilde{B}_s^2.
\end{align*}
Consequently 
\begin{align*}
 \sqrt{c} \left(B_{s}^{1}- B_{s}^{2}\right)= \sqrt{2c}B_s+ \int_0^s f^{\prime} \left(L_{r}(H(r))\right) dr, 
\end{align*}
where \[ B_s=\frac{1}{\sqrt{2}} \Big( \widetilde{B}_s^1-\widetilde{B}_s^2\Big) \] is a standard Brownian motion under $\widetilde{\mathbb{P}}.$ Consequently $H$ is a weak solution of the SDE 
\begin{align}\label{Hinteract}
cH(s)= Y(s)+ \int_0^s f^{\prime} \left(L_{r}(H(r))\right) dr-  \inf_{0\le r\le s}Y(r)  - \int_0^s\int_0^\infty\left(z + \inf_{r\leqslant u \leqslant s}Y(u)-Y(r) \right)^+\Pi(dr, dz).
\end{align}
We now deduce from Proposition \ref{convU} and Lemma 24 in \cite{EP} the main result of this subsection.
\begin{theorem}\label{fbded}
Assume that $f\in \mathcal{C}^1(\R_+)$, $f(0)=0$ and $f'$ is bounded. Then
the law of the approximate height process $H^N$, defined under $\tilde{\P}^N$ (i.e. with the Poisson processes having the intensities specified by \eqref{tauxTilde}) converges  towards the law of the height process $H$ under $\tilde{\P}$ (i.e. specified by \eqref{Hinteract}). 
\end{theorem}
\subsection{The general case $(f\in \mathcal{C}^1$ and $f' \le \theta)$}
We first note that condition \eqref{hypof} guarantees only local boundedness of $f'$. Thus, in order to make sure that Girsanov's Theorem is applicable, we use a localization procedure and associate to each $n \in (0, \infty)$ a function $f_n \in \mathcal{C}^1(\R_+)$, $f_n^{\prime}$ is uniformly continuous on $\R_+$, and  
 \begin{equation*}
f_n(x)=
\left\{
    \begin{array}{ll}
   f(x) , \quad \mbox{if} \quad 0<x\le n, &\\\\ f(n)+f'(x)(x-n) , \quad  \mbox{if} \quad  x>n.
          \end{array}
           \right.
    \end{equation*}
From this definition, it is easy to see that $f_n^{\prime}(x)= f^{\prime}(x\wedge n)$, which implies $\sup_{x\in (0,\infty)}|f_n^{\prime}(x)|=\sup_{0\le x \le n}|f^{\prime}(x)|$. 

Now we define the processes $\{U^N_n(s),U_n(s),\ s\ge0\}$ exactly as the process $U^N, U$, except that $f$ is replaced by $f_n$. Let us now state our final result.

\begin{theorem}\label{thfin}
Assume that $f\in \mathcal{C}^1(\R_+)$, $f(0)=0$ and $f'(z)\le\theta$, for some given $\theta>0$. Then, as $N\to\infty$, the law of $H^N$, specified by \eqref{DEFHNVN}
with the intensities of the Poisson processes specified by \eqref{tauxTilde} converges to the law of $H$, the solution of 
equation \eqref{Hinteract}.
\end{theorem}
\begin{proof}
We work on the probability space $(\Omega,\FF,\P)$. 
We consider the processes $H^N(r)$ and $H(r)$ restricted to an arbitrary time interval $[0,s]$. 
Suppose we have two interaction functions $f^1$ and $f^2$ which both satisfy the above assumption \ref{hypof}, and which coincide on the interval
$[0,K]$. It is then plain that the corresponding processes $H^{N,1}$ and $H^{N,2}$ (resp. $H^1$ and $H^2$) have the same law on the time interval $[0,S^N_K]$ (resp. $[0,S_K]$), where
\[ S^N_k=\inf\{s>0,\ H^{N,1}_s\vee H^{N,2}_s>K\}, \ \text {resp. } S_K=\inf\{s>0,\ H^1_s\vee H^2_s>K\,.\]

For each $m,n\ge1$, consider the event
\[A_{m,n}=\{\sup_{0\le r\le s} H(r)\le m;\ \sup_{0\le r\le s;\ 0\le t\le m}L_r(t)\le n\}.\] On the event $A_{m,n}$, 
$\sup_{0\le r\le s}L_r(H_r)\le n$. On the event $A_{m-1,n-1}$, from Proposition \ref{convHN} and Proposition \ref{BUT}, for $N$ large enough, $\sup_{0\le r\le s}H^N(r)\le m$ and $\sup_{0\le r\le s,\ 0\le t\le m}L^N_r(t)\le n$. Consequently on the event $A_{m,n}$, for such an $N$, $\sup_{0\le r\le s}L^N_r(H^N_r)\le n$, and from
Theorem \ref{fbded} with $f$ replaced by $f_n$ tells us that $H^N$ with the intensities specified by \eqref{tauxTilde}
(but with $f$ replaced by $f_n$) converges towards $H$, the weak solution of \eqref{Hinteract}, but with $f$ replaced by $f_n$.  But on the event $A_{m-1,n-1}$, and uniformly for $N$ large enough, the intensities in \eqref{tauxTilde} with $f_n$ and with $f$ coincide, and similarly for the equation \eqref{Hinteract}. Since $\cup_{m,n\ge2}A_{m,n}=\Omega$, the result follows.
\end{proof}

\section{Appendix}
In this section we recall few important notions and give some results used in this work. We do not give proofs of most of the following statements. 
\subsection{Two Girsanov Theorems}
We state two versions of the Girsanov theorem, one for the Brownian and one for the point process case. The first one can be found, e.g., in \cite{PR} and the second one combines Theorems T2 and T3 from \cite{Bre}, pages 165-166. We assume here that our probability space $(\Omega, \mathbb{P}, \mathcal{F})$ is such that $\mathcal{F}= \sigma (\cup_{t>0} \mathcal{F}_t).$    
\begin{proposition}\label{Gir1}
Let $\{B_s, \ s\ge0 \}$ be a standard $d-$dimensional Brownian motion (i.e., its coordinates are mutually independent standard scalar Brownian motions) defined on the filtered probability space $(\Omega, \mathbb{P}, \mathcal{F}).$ Let moreover $\phi$ be an $\mathcal{F}$-progressively measurable $d-$dimensional process satisfying $\int_0^s |\phi(r)|^2 dr < \infty$ for all $s\ge0.$ Let  
\begin{equation*}
U_s= \exp \left\{\int_0^s \langle \phi(r),dB_r \rangle -\frac{1}{2} \int_0^s |\phi(r)|^2 dr \right\}.
\end{equation*}
If \ $\E(U_s)=1,$ $s\ge0, $ then $\widetilde{B}_s:=B_s - \int_0^s \phi(r) dr,$ $s\ge0,$ is a standard Brownian motion under the unique probability measure $\widetilde{\mathbb{P}}$ on $(\Omega, \mathcal{F})$ which is such that $d\widetilde{\mathbb{P}}|_{ \mathcal{F}_s}/ d\P |_{ \mathcal{F}_s}= U_s,$ for all $s\ge0.$
\end{proposition} 
\begin{proposition}\label{Gir2}
Let $\{(Q_s^{(1)},..., Q_s^{(d)}),\ s\ge0 \}$ be a $d$-variate point process adapted to some filtration $\mathcal{F},$ and let $\{ \lambda_s^{(i)}, \ s\ge0\}$ be the predictable $(\P, \mathcal{F})$-intensity of $Q^{(i)},$ $1\le i\le d.$ Assume that none of the $Q^{(i)}, Q^{(j)},$ $i\neq j,$ jump simultaneously. Let  $\{ \alpha_r^{(i)}, \ r\ge0\},$ $1\le i\le d,$ be nonnegative $\mathcal{F}$-predictable processes such that for all $s\ge0$ and all  $1\le i\le d$ 
\begin{equation*}
\int_0^s  \alpha_r^{(i)}  \lambda_r^{(i)} dr < \infty \quad \mathbb{P}-a.s.
\end{equation*}
For $i=1,...,d $ and  $s\ge0$ define, $\{ T_k^{i}, \ k=1,2...\} $ denoting the jump times of  $Q^{(i)},$
\begin{equation*}
U_s^{(i)}= \left(\prod_{k\ge1:  T_k^{i}\le s} \alpha_{T_k^{i}}^{(i)} \right)\exp \left\{\int_0^s (1- \alpha_r^{(i)}) \lambda_r^{(i)} dr \right\} \quad and \quad U_s=\prod_{i=1}^d U_s^{(i)}, \quad s\ge0.
\end{equation*}
If \ $\E(U_s)=1,$ $s\ge0, $ then, for each $1\le i\le d,$ the process $Q^{(i)}$ has the $(\widetilde{\mathbb{P}}, \mathcal{F})$-intensity $\widetilde{\lambda}_s^{(i)}=\alpha_s^{(i)}  \lambda_s^{(i)},$ $s\ge0,$ where the probability measure $\widetilde{\mathbb{P}}$ is defined by $d\widetilde{\mathbb{P}}|_{ \mathcal{F}_s}/ d\P |_{ \mathcal{F}_s}= U_s,$ for all $s\ge0.$
\end{proposition} 
\subsection{Tightness criteria in \texorpdfstring{$\mathcal{D}([0, +\infty))$}{Lg}} 
We denote by $\mathcal{D}([0,\infty))$,  the space of functions from $[0,\infty)$ into $\mathbb{R}$ which are right continuous and have left limits at any $t>0$ (as usual such a function is called cˆàdlàˆg).We briefly write $\mathbb{D}$ for the space of adapted, cˆàdlàˆg stochastic processes. We shall always equip the space $\mathcal{D}([0,\infty))$ with the Skorohod topology, for the definition of which we refer the reader to Billingsley \cite{Bill} or Joffe, M\'{e}tivier \cite{JM}. 

We first state a tightness criterion, which is Theorem 13.5 from  \cite{Bill} :
\begin{proposition}\label{TenD1}
Let $(X_t^n, \ t\ge0)_{n\ge0}$ be a sequence of random elements of $\mathcal{D}([0, +\infty);\mathbb{R}^d).$ A sufficient condition of  $(X_t^n, \ t\ge0)_{n\ge0}$ to be tight is that the two conditions 1 and 2 be satisfied : 

1. For each $t,$ the sequence of random variables $(X_t^n, \ n\ge0)$ is tight in $\mathbb{R}^d.$

2. For each $T>0,$ there exists $\beta, C>0$ and $\theta>1$ such that 
$$\E (|X_{t+h}^n-X_t^n |^\beta |X_t^n-X_{t-h}^n |^\beta) \le Ch^\theta,$$ for all $0<t<T,$ $0<h <t,$ $n\ge0$.
\end{proposition}
The convergence in  $\mathcal{D}([0, +\infty);\mathbb{R}^d)$ is not additive in general. The next proposition gives a sufficient condition to have this additivity,
which is Lemma 7.1 of \cite{LePW}.
\begin{proposition}\label{TenD2}
Let ${\{X_{t}^{n}, \ t\geq0 \}}_{n\geq0}$ and ${\{Y_{t}^{n},\ t\geq0 \}}_{n\geq0}$ be two tight sequences of random elements of $\mathcal{D}([0,\infty);\mathbb{R}^d)$ such that any limit of a weakly converging sub-sequence of the sequence ${\{X_{t}^{n}, \ t\geq0 \}}_{n\geq0}$ is a.s. continuous. Then ${\{X_{t}^{n}+Y_{t}^{n}, \ t\geq0 \}}_{n\geq0}$ is tight in $\mathcal{D}([0,\infty);\mathbb{R}^d)$.
\end{proposition}
 Consider a sequence ${\{X_{t}^{n}, \ t\geq0 \}}_{n\geq1}$ of one-dimensional semi-martingales, which is such that for each $n\geq1$,
\begin{equation*}
X_{t}^{n}= X_{0}^{n}+ \int_{0}^{t}{\varphi}_s^{n}ds+ M_{t}^{n},\quad t\geq0;
\end{equation*}
where for each $n\geq1$, $M^{n}$ is a locally square-integrable martingale such that 
\begin{equation*}
\langle M^{n}{\rangle}_{t}= \int_{0}^{t}{\psi}_s^{n}ds, \quad t\geq0;
\end{equation*}
${\varphi}^{n}$ and ${\psi}^{n}$ are Borel measurable functions with values into $\mathbb{R}$ and ${\mathbb{R}}_{+}$ respectively. We define $V_{t}^{n}= X_{0}^{n}+ \int_{0}^{t}{\varphi}_{n}(X_{s}^{n})ds$. 

The following statement can be deduced from Theorem 13.4 and 16.10 of  \cite{Bill}.
\begin{proposition}\label{TenD3}
A sufficient condition for the above sequence ${\{X_{t}^{n}, t\geq0 \}}_{n\geq1}$ of semi-martingales to be tight in $\mathcal{D}([0,\infty))$ is that both
\begin{equation*}
the{~}sequence{~}of{~}r.v.'s {~}\{X_{0}^{n}, n\geq1 \} {~}is {~}tight;
\end{equation*}
and for some $p>1$,
\begin{equation*}
\forall T>0,{~} the {~}sequence {~}of {~}r.v.'s{~} \Bigg\{\int_{0}^{T}{[|{\varphi}_{n}(X_{s}^{n})|+{\psi}_{n}(X_{t}^{n})]}^{p}dt, n\geq1 \Bigg\} {~}is {~}tight.
\end{equation*}
Those conditions imply that both the bounded variation parts $\{V^{n}, n\geq1\}$ and the martingale parts $\{M^{n}, n\geq1\}$ are tight, and that the limit of any converging subsequence of $\{V^{n}\}$ is a.s. continuous.

\quad \quad If moreover, for any $T>0$, as $n\longrightarrow \infty$,
\begin{equation*}
\sup_{0\leq t \leq T} |M_{t}^{n}-M_{t^{-}}^{n}|\longrightarrow 0 {~}in {~}probability,
\end{equation*}
then any limit X of a converging subsequence of the original sequence ${\{X^{n}\}}_{n\geq1}$ is a.s. continuous.
\end{proposition}

\begin{lemma}\label{le:tight_decomp}
Let $\{X^N_t,\ t\ge0\}_{N\ge1}$ be a sequence of processes whose trajectories belong to $\mathcal{D}([0,+\infty))$
and satisfy
\begin{equation}\label{eq:bound}
 \sup_{N\ge1}\E\left(\sup_{t\ge0}|X^N_t|\right)<\infty\,.
 \end{equation}
We assume that for each $\delta>0$, there exists a decomposition $X^N_t=X^{N,\delta,1}_t+X^{N,\delta,2}_t$ such that 
$\{X^{N,\delta,1}\}_{N\ge1}$ is tight as random elements of $\mathcal{D}([0,+\infty))$, and moreover, for all $\eta>0$
\begin{equation}\label{eq:to0}
\limsup_{N\to\infty}\P\left(\sup_{t\ge0}|X^{N,\delta,2}_t| > \eta\right)\to0,\ \text{ as }\delta\to0\,.
\end{equation}
Then the sequence $\{X^N_t,\ t\ge0\}_{N\ge1}$ is tight as random elements of $\mathcal{D}([0,+\infty))$.
\end{lemma}
\begin{proof}
 We shall exploit Theorem 13.2 from \cite{Bill}. We will establish tightness in $D([0,T])$, for $T>0$ arbitrary. The moduli of continuity below are understood to be defined on the time interval $[0,T]$. Condition (i) follows from our assumption \eqref{eq:bound}.
Hence it suffices to verify (ii), namely that for each $\varepsilon,\rho>0$, there exists $\eta>0$ such that
\begin{equation}\label{eq:tight2}
\P\left(w'_{X^N}(\eta)\ge\varepsilon\right)\le\rho\, .
\end{equation}
We first note that from the definitions of $w$ (resp. $w'$) (see (7.1) (resp. (12.6)) in \cite{Bill}), for each $\eta>0$,
\begin{equation}\label{eq:ineq}
w'_{X^N}(\eta)\le w'_{X^{N,\delta,1}}(\eta)+w_{X^{N,\delta,2}}(\eta)\, .
\end{equation}
But since $w_{X^{N,\delta,2}}(\eta)\le 2\sup_{t\ge0}|X^{N,\delta,2}_t|$, for all $\eta>0$,
\begin{align*}
\limsup_{N\to\infty}\P\left(w_{X^{N,\delta,2}}(\eta)\ge\varepsilon/2\right)&\le
\limsup_{N\to\infty}\P\left(\sup_{t\ge0}|X^{N,\delta,2}_t|\ge\varepsilon/4\right).
\end{align*}
Hence from \eqref{eq:to0}, we can choose $\delta_{\varepsilon,\rho}>0$ such that
\begin{equation}\label{eq:ineq1}
\limsup_{N\to\infty}\P\left(w_{X^{N,\delta_{\varepsilon,\rho},2}}(\eta)\ge\varepsilon/2\right)\le \frac{\rho}{2} , \forall \eta>0\,.
\end{equation}
Since $\{X^{N,\delta_{\varepsilon,\rho},1}\}_{N\ge1}$ is tight, again from Theorem 13.2 from \cite{Bill},
we can choose $\eta>0$ small enough such that 
\begin{equation}\label{eq:ineq2}
\limsup_{N\to\infty}\P\left(w'_{X^{N,\delta_{\varepsilon,\rho},1}}(\eta)\ge\varepsilon/2\right)\le \frac{\rho}{2}\,.
\end{equation}
A combination of \eqref{eq:ineq}, \eqref{eq:ineq1} and \eqref{eq:ineq2} yields \eqref{eq:tight2}.

\end{proof}

\frenchspacing
\bibliographystyle{plain}

\end{document}